\newtheorem{theorem}{Theorem}[section]
\newtheorem{proposition}[theorem]{Proposition}
\newtheorem{lemma}[theorem]{Lemma}
\newtheorem{corollary}[theorem]{Corollary}
\newtheorem{examples}[theorem]{Examples}
\newtheorem{definition}[theorem]{Definition}
\newtheorem{remark}[theorem]{Remark}
\newtheorem{remarks}[theorem]{Remarks}
\newtheorem{problem}[theorem]{Problem}
\newcommand{\CC}{\ensuremath{\mathbb{C}}} 
\newcommand{\RR}{\ensuremath{\mathbb{R}}} 
\newcommand{\id}{\mathrm{id}} 
\newcommand{\Ad}{\mathrm{Ad}} 
\newcommand{\defeq}{:=} 
\newcommand{\inv}{^{-1}} 
\newcommand{\centre}[1][M]{\mathcal{Z}({#1})} 
\newcommand{\unitiz}[1][A]{{#1}_{(1)}} 
\newcommand{\HH}{\ensuremath{\mathcal{H}}} 
\newcommand{\ip}[2]{\left\langle #1 , #2 \right\rangle} 
\newcommand{\nrm}[1][\cdot]{\Vert #1 \Vert} 
\newcommand{\dualp}[2]{\left\langle #1 , #2 \right\rangle} 
\newcommand{\Bd}[1][\HH]{\ensuremath{\mathcal{B}(#1)}} 
\newcommand{\Cpt}[1][\HH]{\ensuremath{\mathcal{K}(#1)}} 
\newcommand{\btens}{\mathbin{\overline{\otimes}}} 
\newcommand{\grp}{G} 
\newcommand{\Aut}{\mathop{\mathrm{Aut}}} 
\newcommand{\lreg}{\lambda} 
\newcommand{\SL}{\operatorname{SL}} 
\newcommand{\vN}[1][\grp]{\ensuremath{\mathrm{vN}(#1)}} 
\newcommand{\rgCst}[1][G]{\ensuremath{C^*_\lreg(#1)}} 
\newcommand{\cros}[3]{\ensuremath{#1 \rtimes_{#3} \! #2}} 
\newcommand{\rcros}[3]{\ensuremath{#1 \rtimes_{{#3},r}  #2}} 
\newcommand{\crs}{\cros{A}{G}{\alpha}} 
\newcommand{\rcrs}{\rcros{A}{G}{\alpha}} 
\newcommand{\vNcros}[3]{\ensuremath{#1 \rtimes_{#3} \! #2}} 
\newcommand{\vNcrs}{\vNcros{M}{\grp}{\alpha}} 
\newcommand{\piact}[1][\alpha]{\pi_{#1}} 
\newcommand{\lrega}{\lambda} 
\newcommand{\coact}[1][\rgCst]{\Delta_{#1}} 
\newcommand{\falg}[1][\grp]{\ensuremath{A(#1)}} 
\newcommand{\unactC}[2]{{#1}^{**}_{#2}} 
\newcommand{\Aact}{\unactC{A}{\alpha}}
\newcommand{\CB}[1][A]{\ensuremath{\mathcal{CB}(#1)}} 
\newcommand{\cb}{{\mathrm{cb}}} 
\newcommand{\projtens}{\mathbin{\hat{\otimes}}} 
\newcommand{\Mult}{\mathrm{M}} 
\newcommand{\modu}{\mathbf{mod}}
\title[Amenable and inner amenable actions]{Amenable and inner amenable actions and approximation properties for crossed products by locally compact groups}
\author{Andrew McKee}
\address{Faculty of Mathematics, University of Bia\l ystok, ul.\ K.\ Cio\l kowskiego 1M, 15-425 Bia\l ystok, Poland}
\email{amckee240@qub.ac.uk}
\author{Reyhaneh Pourshahami}
\address{Department of Mathematics, Kharazmi University, 50 Taleghani Ave., 15618, Tehran, Iran}
\email{std\_reyhaneh.pourshahami@khu.ac.ir}
\begin{document}

\maketitle

\begin{abstract}
Amenable actions of locally compact groups on von~Neumann algebras are investigated by exploiting the natural module structure of the crossed product over the Fourier algebra of the acting group. 
The resulting characterisation of injectivity for crossed products generalises a result of Anantharaman-Delaroche on discrete groups.
Amenable actions of locally compact groups on $C^*$-algebras are investigated in the same way, and amenability of the action is related to nuclearity of the corresponding crossed product. A survey is given to show that this notion of amenable action for $C^*$-algebras satisfies a number of expected properties.
A notion of inner amenability for actions of locally compact groups is introduced, and a number of applications are given in the form of averaging arguments, relating approximation properties of crossed product von~Neumann algebras to properties of the components of the underlying $w^*$-dynamical system. 
We use these results to answer a recent question of Buss--Echterhoff--Willett.
\end{abstract}

\section{Introduction}
\label{sec:intro}

In the setting of group actions on operator algebras there are a number of questions which have satisfactory answers for actions of discrete groups, but which are still open for the case of actions of locally compact groups.
\begin{enumerate}
	\item 
		\begin{enumerate}
			\item How is injectivity of the crossed product corresponding to a $w^*$-dynamical system $(M,\grp,\alpha)$ related to amenability of the action $\alpha$ and injectivity of $M$? (See \cite[Corollaire 4.2]{AD79} for the answer in the discrete case.)
			\item How is nuclearity of the (full or reduced) crossed product of a $C^*$-dynamical system $(A,G,\alpha)$ related to amenability of the action $\alpha$ and nuclearity of $A$? (See \cite[Th\'{e}or\`{e}me 4.5]{AD87} for the answer in the discrete case.)
		\end{enumerate}
	\item How can one carry out Haagerup's averaging arguments, in order to pass from approximation properties of a crossed product to approximation properties of the components of a ($C^*$ or $w^*$)-dynamical system? (See \textit{e.g.}\ \cite[Proposition 3.4]{MSTT18} for the answer in the discrete case.)
\end{enumerate}
In this paper we show that these problems can be solved by accounting for the module structure over the Fourier algebra of the acting group, which crossed products naturally carry. 

Restricting for a moment to group $C^*$-algebras and group von~Neumann algebras (the case of trivial actions) question (1) goes back to Lance~\cite[Theorem 4.3]{Lan73}, who showed that a discrete group $G$ is amenable  if and only if $\rgCst$ is a nuclear $C^*$-algebra (and a similar technique shows $\vN$ is an injective von~Neumann algebra). 
Connes~\cite{Con76} showed that this does not generalise directly to locally compact groups; indeed, though amenable locally compact groups have injective group von~Neumann algebras, there exist nonamenable locally compact groups $G$ for which $\vN$ is injective (any nonamenable connected Lie group is an example).
Lau and Paterson~\cite[page 85]{Pat88} recognised that it is only in the class of inner amenable groups that injectivity of $\vN$ implies amenability of $G$. Since every amenable group is inner amenable this provides a complete answer to the problem of understanding amenability of locally compact groups in terms of their group von~Neumann algebras.
Recently Crann and Tanko~\cite[Theorem 3.4]{CrTa17} gave a reinterpretation of this result in terms of module maps: it is injectivity of $\vN$ \emph{as an $\falg$-module} which implies that $G$ is amenable, and inner amenable groups are the locally compact groups for which injectivity of $\vN$ implies injectivity of $\vN$ as an $\falg$-module --- see Theorem~\ref{th:laupat}.
All discrete groups are inner amenable, so in the discrete case one does not need to account for the $\falg$-module structure.

For trivial actions the averaging arguments mentioned in (2) were introduced by Haagerup~\cite{H16} (written in 1986 and circulated as an unpublished manuscript), where such arguments were used to study discrete groups. 
Later work of Lau--Paterson~\cite[page 85]{Pat88} indicated that one can use inner amenability, rather than discreteness, to give such averaging arguments, extending these techniques beyond discrete groups. 
Crann~\cite{Cra19} (see also \cite[Section 3]{CrTa17}) rephrases inner amenability in terms of relative module injectivity, clarifying the exact relationship between amenability of a locally compact group and injectivity of its group von~Neumann algebra.

Moving on to group actions, Anantharaman-Delaroche~\cite{AD79} introduced a definition of amenable action of a locally compact group on a von~Neumann algebra, building on Zimmer's work \cites{Zim77a,Zim77b,Zim78} on amenable actions on commutative von~Neumann algebras. 
For actions of discrete groups this definition is used to solve question (1) part (a) \cite[Corollaire 4.2]{AD79}, and was subsequently adapted to solve question (1) part (b) \cite[Th\'{e}or\`{e}me 4.5]{AD87}.
We caution the reader that there is another definition of amenable action on a set in the literature which is different from Zimmer's; in this paper we consider only Zimmer's notion and its generalisation by Anantharaman-Delaroche. 
The question of how to define amenable actions of locally compact groups on $C^*$-algebras remained open (see \cite[Section 9]{AD02}). 
This question has attracted significant recent attention --- we note work of Bearden--Crann~\cite{BeCr20a}, Buss--Echterhoff--Willett~\cites{BEW20a,BEW20b}, Ozawa--Suzuki~\cite{OzSu20} and  Suzuki~\cite{Suz19}, all of which are concerned with amenable actions of locally compact groups on $C^*$-algebras.

Since we began this work there has been a flurry of papers on the topic of amenable actions of locally compact groups.
As well as the papers \cites{BeCr20a,BEW20a,BEW20b,OzSu20,Suz19} mentioned above, which focus on amenable actions on $C^*$-algebras, we have recent work of Crann~\cite{Cra20} and Bearden--Crann~\cite{BeCr20b}.
We have rewritten this paper several times in attempts to account for these works.
In particular, Crann and Bearden--Crann \cites{Cra20,BeCr20b} also discovered the relationship between amenable actions and module injectivity, and gave another related module property. 
Our results were obtained independently, but we note that we have been heavily influenced by Crann's earlier work on module injectivity for quantum groups \cites{Cra17,Cra19}.

The organisation of this paper is as follows. 
In Section~\ref{sec:prelims} we review the definitions and results surronding the notion of group operator algebras and crossed products. Then we review the module versions of injectivity, and their link to amenability, which we aim to generalise. 
In Section~\ref{sec:amenablevNcase} we generalise amenable actions on von~Neumann algebras by using the natural module structure of a crossed product over the Fourier algebra. 
In Section~\ref{sec:inneramenableactions} we introduce inner amenable actions on von~Neumann algebras. We give some basic properties of this notion and extend Lau--Paterson's result \cite[Theorem 3.1]{LaPa91} to inner amenable actions on non-commutative injective von~Neumann algebras.
In Section~\ref{sec:approximation} we consider the weak* completely bounded approximation property for crossed product von~Neumann algebras, giving an example of how inner amenable actions can be used for averaging arguments in Proposition~\ref{pr:amenactCBAP}. 
Section~\ref{sec:amenactCalgs} is devoted to amenable actions on $C^*$-algebras. 
We use the natural extension of Anantharaman-Delaroche's definition of amenable action of a discrete group on a $C^*$-algebra to locally compact groups, and relate this to nuclearity of the corresponding crossed products.
We also give a survey of known results, mainly from work of Buss--Echterhoff--Wilett~\cite{BEW20a}, Bearden--Crann~\cite{BeCr20a} and Suzuki~\cite{Suz19}, to demonstrate how close our notion of amenable action comes to satisfying the requirements for an amenable action on a $C^*$-algebra suggested by Anantharaman-Delaroche~\cite[Section 9.2]{AD02}.

\section{Preliminaries}
\label{sec:prelims}

For von~Neumann algebras $M$ and $N$ we will write their normal spatial tensor product as $M \btens N$. 
We will use throughout the theory of operator spaces and completely bounded maps; the reference \cite{EfRu00} is suggested for this background material.

\subsection{Group operator algebras and crossed products}
\label{ssec:multiplierprelims}

Let $\grp$ be a locally compact group.
We write $\lreg$ for the \emph{left regular representation}
\[
    \lreg : \grp \to \Bd[L^2(\grp)] ;\ \lreg_r \xi(s) \defeq \xi(r\inv s) , \quad r,s \in \grp , \ \xi \in L^2(\grp) ,
\]
which is a unitary representation of $\grp$, so extends to a $*$-representation of $C_c(\grp)$ on $L^2(\grp)$ by
\[
    \lreg : C_c(\grp) \to \Bd[L^2(\grp)] ;\ \lreg(f) \defeq \int_\grp f(r) \lreg_r dr , \quad f \in C_c(\grp) .
\]
The \emph{reduced group $C^*$-algebra} $\rgCst$ is the $C^*$-algebra obtained by completing $C_c(\grp)$ in the norm induced by $\lreg$, while the group von~Neumann algebra is the double commutant $\vN \defeq \rgCst'' = \{ \lreg_r : r \in G \}''$. 

The \emph{Fourier algebra of $\grp$}, denoted by $\falg$, is the set
\[
    \falg = \{ u : \grp \to \CC : u(r) = \ip{\lreg_r \xi}{\eta} , \ \xi,\eta \in L^2(\grp) \} ,
\]
with pointwise operations and the norm $\nrm[u] = \inf \nrm[\xi] \nrm[\eta]$, where the infimum is taken over all possible representations $u(\cdot) = \ip{\lreg_{\cdot} \xi}{\eta}$.
The Fourier algebra is a completely contractive Banach algebra, and is naturally identified with the predual of $\vN$ by the pairing $\dualp{\lreg_r}{u} = u(r)$ ($u \in \falg$).
We refer to \cite{Eym64} for further background. 

Now let $A$ be a $C^*$-algebra, and $\alpha : \grp \to \Aut(A)$ a homomorphism which is continuous in the point-norm topology, \textit{i.e.}\ let $(A , \grp , \alpha)$ be a $C^*$-dynamical system.
A pair $(\phi , \rho)$, where $\phi : A \to \Bd$ is a $*$-representation and $\rho : \grp \to \Bd$ is a unitary representation, is called a \emph{covariant pair for $(A,G,\alpha)$} if $\rho_r \phi(a) \rho_r^* = \phi(\alpha_r(a))$ for all $r \in \grp,\ a \in A$.
In particular, a \emph{regular covariant pair} is constructed as follows: suppose that $\pi : A \to \Bd$ is a faithful $*$-representation and define
\begin{equation}\label{eq:covpair}
\begin{gathered}
    \piact : A \to \Bd[L^2(\grp) \otimes \HH] ; \ \piact(a) \xi(s) \defeq \alpha_{s\inv}(a) \xi(s) , \\ \lrega : \grp \to \Bd[L^2(\grp) \otimes \HH] ; \ \lrega_r \xi(s) \defeq \xi(r\inv s) .
\end{gathered}
\end{equation}
A covariant pair $(\phi , \rho)$ as above defines a $*$-representation of $C_c(\grp , A)$ on $\HH$ by
\[
    \phi \rtimes \rho (f) \defeq \int_\grp \phi \big( f(r) \big) \rho_r dr , \quad f \in C_c(\grp , A) .
\]
The \emph{reduced crossed product} $\rcrs$ is the $C^*$-algebra obtained by completing $C_c(\grp ,A)$ in the norm induced by $\piact \rtimes \lrega$; it can be shown that this definition is independent of the original faithful $*$-representation $\pi$. 
The \emph{full crossed product} $\crs$ is the completion of $C_c(G,A)$ in the universal norm
\[
	\nrm[f] \defeq \sup \{ \nrm[\phi \rtimes \rho(f)] : \text{$(\phi,\rho)$ is a covariant pair for $(A,G,\alpha)$} \} .
\]
We refer to Williams~\cite[Chapter 2]{Wil07} for the details of this construction, and the fact that the full crossed product is universal in the following sense: 
every $*$-representation of the full crossed product $\crs$ is of the form $\phi \rtimes \rho$ for some covariant pair $(\phi , \rho)$ for $(A,\grp,\alpha)$.
In particular we will use the covariant pair $(i_A , i_\grp)$ for which $i_A \rtimes i_\grp$ is the universal $*$-representation of $\crs$ in Section~\ref{sec:amenactCalgs}.


If $M \subset \Bd$ is a von~Neumann algebra and $\alpha$ is a point-weak* continuous action of $\grp$ on $M$ then we say that $(M,\grp,\alpha)$ is a \emph{$w^*$-dynamical system}. 
In this case the definition of $\piact$ in (\ref{eq:covpair}) is an injective, weak*-continuous homomorphism $\piact : M \to L^\infty(\grp) \btens M$ satisfying 
\[
	(\id \otimes \piact) \circ \piact = (\coact[L^\infty(\grp)] \otimes \id) \circ \piact .
\]
Here $\coact[L^\infty(\grp)]$ is the natural coproduct on $L^\infty(\grp)$, given by $\coact[L^\infty(\grp)](\phi)(s,t) \defeq \phi(st)$ ($\phi \in L^\infty(\grp),\ s,t \in \grp$).
The crossed product associated to $(M,\grp,\alpha)$ is the von~Neumann algebra $\vNcrs \defeq \{ \pi_\alpha(M) , \vN \otimes \CC \}'' \subset \Bd[L^2(\grp) \otimes \HH]$.
We write $\pi_{\hat{\alpha}} : \vNcrs \to \vN \btens \vNcrs $ for the natural coaction of $\grp$ on $\vNcrs$ (see \cite[Proposition 2.4]{NaTa79}), given by
\begin{gather*}
	\pi_{\hat{\alpha}} \big( \pi_\alpha(a) \big) \defeq 1 \otimes \pi_\alpha(a) ,  \quad  
	\pi_{\hat{\alpha}} (\lambda_r \otimes 1) \defeq \lambda_r \otimes \lambda_r \otimes 1 ,
\end{gather*}
for all $r \in \grp$ and all $a \in M$.
This coaction induces a module action of $\falg$ on $\vNcrs$, with the module structure given by
\[
	u * x \defeq (u \otimes \id) \pi_{\hat{\alpha}} (x) , \quad u \in \falg,\ x \in \vNcrs .
\]
In fact, with this definition $\vNcrs$ is a faithful $\falg$-module.  
This module structure induces an inclusion 
\[
	\Delta : \vNcrs \hookrightarrow \CB[\falg, \vNcrs] ;\ \Delta(x)(u) \defeq u * x , \quad u \in \falg,\ x \in \vNcrs .
\]
There is a natural $\falg$-module structure on $\CB[\falg, \vNcrs]$ given by
\[
	(u \cdot \Psi)(v) \defeq \Psi(v u) , \quad u,v \in \falg ,
\]
and $\Delta$ is an $\falg$-module map under this definition. 

\begin{remark}\label{re:inclusionDeltaandcoaction}
{\rm 
By \textit{e.g.}\ \cite[Corollary 7.1.5]{EfRu00} there is a natural isomorphism 
\[
	\CB[\falg, \vNcrs] \cong \vN \btens (\vNcrs) ,
\] 
and under this isomorphism the inclusion $\Delta$ is induced by the coaction $\pi_{\hat{\alpha}}$. Indeed, the image of $\Delta$ is identified by this isomorphism with the image of $\pi_{\hat{\alpha}}$ in $\vN \btens (\vNcrs)$.
}
\end{remark}

The coproduct on $\vN$ is implemented by the multiplicative unitary $U \in \vN \btens L^\infty(\grp)$, given by 
\[
    U \xi (s,t) \defeq \xi(t s , t) , \quad \xi \in L^2(\grp) \otimes L^2(\grp) ,\ s,t \in \grp ,
\]
so that $\coact[\vN](x) = U^* (1 \otimes x) U$ ($x \in \vN$). 
This extends to a coaction of $\grp$ on $\Bd[L^2(\grp)]$ by the same formula. 
Thus the module action $*$ of $\falg$ on $\vN$ extends to a module action on $\Bd[L^2(\grp)]$ by 
\[
	u * x \defeq (u \otimes \id) U^* (1 \otimes x) U , \quad u \in \falg,\ x \in \Bd[L^2(G)] .
\]

We include the following easy lemma because it is used several times in our arguments.

\begin{lemma}\label{le:equivariantmapextends}
Let $(M,G,\alpha)$ and $(N,G,\beta)$ be w*-dynamical systems and suppose that $\Phi : M \to N$ is a $\grp$-equivariant map, \textit{i.e.}\ $\Phi \circ \alpha_r = \beta_r \circ \Phi$ for all $r \in \grp$.
Then there is an $\falg$-module map $\tilde{\Phi} : \Bd[L^2(\grp)] \btens M \to \Bd[L^2(\grp)] \btens N$ which restricts to an $\falg$-module map $\tilde{\Phi} |_{\vNcrs} : \vNcrs \to \vNcros{N}{\grp}{\beta}$.
Moreover, if $\Phi$ is completely bounded then so is $\tilde{\Phi}$, and $\nrm[\tilde{\Phi}]_\cb \leq \nrm[\Phi]_\cb$.
\end{lemma}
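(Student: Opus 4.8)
The plan is to take $\tilde\Phi$ to be the amplification $\id_{\Bd[L^2(\grp)]} \btens \Phi$ and to verify the three assertions of the statement separately. To make sense of the amplification I will assume $\Phi$ is normal and completely bounded (the case occurring throughout the paper); then $\tilde\Phi \defeq \id_{\Bd[L^2(\grp)]} \btens \Phi$ is automatically a well-defined normal completely bounded map $\Bd[L^2(\grp)] \btens M \to \Bd[L^2(\grp)] \btens N$ with $\nrm[\tilde\Phi]_\cb \leq \nrm[\Phi]_\cb$, since amplifying a normal completely bounded map by a fixed $\Bd$-leg stays within the normal completely bounded maps and does not increase the completely bounded norm (cf.\ \cite{EfRu00}); in particular $\tilde\Phi$ is $\sigma$-weakly continuous, which I will use repeatedly.

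For the module property, I would note that the natural $\falg$-action on $\Bd[L^2(\grp)] \btens M$ is the one amplifying (by $\id_M$) the action on $\Bd[L^2(\grp)]$ recalled above: writing $U$ for the multiplicative unitary, $u * X = (u \otimes \id \otimes \id)\big((U^* \otimes 1)(1 \otimes X)(U \otimes 1)\big)$, an expression touching only the two $\Bd[L^2(\grp)]$-legs. Since $\tilde\Phi$ acts only on the $M$-leg it commutes with conjugation by $U \otimes 1$ and with the slice $u \otimes \id \otimes \id$, and pushing the $\sigma$-weakly continuous $\tilde\Phi$ through these operations gives $\tilde\Phi(u * X) = u * \tilde\Phi(X)$. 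The same computation will show that $\tilde\Phi$ intertwines the module actions on the crossed products --- those induced by the coactions $\pi_{\hat{\alpha}}$, $\pi_{\hat{\beta}}$, which are again supported on the $\vN$-leg --- once the restriction is known to land in the right algebra.

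The substantive step is to show $\tilde\Phi(\vNcrs) \subseteq \vNcros{N}{\grp}{\beta}$, and this is where equivariance is used, through the identity $\tilde\Phi(\piact(a)) = \piact[\beta](\Phi(a))$ for $a \in M$. To obtain it I would observe that $\piact(a) \in \Linf \btens M$, on which $\tilde\Phi$ restricts to $\id_{\Linf} \btens \Phi$, and pair against elementary functionals $\omega \otimes \psi$ ($\omega \in L^1(\grp)$, $\psi \in N_*$): using $\Phi \circ \alpha_{s\inv} = \beta_{s\inv} \circ \Phi$,
\[
	\dualp{\tilde\Phi(\piact(a))}{\omega \otimes \psi} = \int_\grp \omega(s)\, \psi\big(\beta_{s\inv}(\Phi(a))\big)\, ds = \dualp{\piact[\beta](\Phi(a))}{\omega \otimes \psi} ,
\]
and such functionals are total in $(\Linf \btens N)_*$. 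Since $\tilde\Phi$ is visibly a right module map over $\vN \otimes \CC$, this gives $\tilde\Phi(\piact(a)(\lreg_r \otimes 1)) = \piact[\beta](\Phi(a))(\lreg_r \otimes 1) \in \vNcros{N}{\grp}{\beta}$ for all $a \in M$, $r \in \grp$. Finally, the covariance relation $(\lreg_r \otimes 1)\piact(b) = \piact(\alpha_r(b))(\lreg_r \otimes 1)$ shows that $\lspan\{\piact(a)(\lreg_r \otimes 1) : a \in M,\ r \in \grp\}$ is a unital $*$-subalgebra of $\vNcrs$, hence $\sigma$-weakly dense; applying the $\sigma$-weakly continuous $\tilde\Phi$ yields the containment, and combined with the previous paragraph this makes $\tilde\Phi|_{\vNcrs}$ the required $\falg$-module map, completely bounded with norm at most $\nrm[\Phi]_\cb$.

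There is no deep obstacle here; the one point needing care is that $\tilde\Phi$ is not multiplicative, so $\tilde\Phi(\vNcrs) \subseteq \vNcros{N}{\grp}{\beta}$ cannot be checked on von~Neumann-algebra generators --- hence the detour through the $\sigma$-weakly dense $*$-subalgebra spanned by the $\piact(a)(\lreg_r \otimes 1)$, on which $\tilde\Phi$ is pinned down by its restriction to $\Linf \btens M$ together with right $\vN$-modularity. The identity $\tilde\Phi \circ \piact = \piact[\beta] \circ \Phi$ itself is just normality of $\Phi$ (to interchange $\id \btens \Phi$ with the direct integral over $\grp$ implicit in $\piact$) together with the equivariance hypothesis.
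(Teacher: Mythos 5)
Your proposal is correct and follows essentially the same route as the paper: both take $\tilde\Phi = \id \otimes \Phi$, use equivariance to get $\tilde\Phi \circ \piact = \piact[\beta] \circ \Phi$ (the paper via the pointwise formula on $L^2(\grp,\HH_N)$, you via slicing against $\omega\otimes\psi$), and then pass to all of $\vNcrs$ using that $\tilde\Phi$ acts as the identity on the $\vN\otimes\CC$ leg. Your treatment of the last step (density of $\lspan\{\piact(a)(\lreg_r\otimes 1)\}$ plus right $\vN$-modularity and normality) is more explicit than the paper's one-line justification; the only caveat is that you assume $\Phi$ normal to define the amplification, whereas the paper later applies the lemma to maps (norm-one projections, states) not assumed normal, for which one should invoke the general completely bounded amplification of \cite{EfRu00} instead.
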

\begin{proof}
Define $\tilde{\Phi} \defeq \id \otimes \Phi$; it is clear that $\tilde{\Phi}$ is an $\falg$-module map, and well-known that $\nrm[\tilde{\Phi}]_\cb \leq \nrm[\Phi]_\cb$.
Since $\Phi$ is $G$-equivariant we have, for $a \in M,\ r \in G,\ \xi \in L^2(G,\HH_N)$,
\[
	\tilde{\Phi}\big( \piact(a) \big) \xi(r) = \Phi \big( \alpha_{r\inv}(a) \big) \xi(r) = \beta_{r\inv}\big( \Phi(a) \big) \xi(r) = \piact[\beta]\big( \Phi(a) \big) \xi(r) .
\]
It follows that $\tilde{\Phi}(\piact(M)) \subset \piact[\beta](N)$, so the claim about the restriction of $\tilde{\Phi}$ follows since it acts as the identity on $\vN \otimes \CC \subset \vNcrs$.
\end{proof}

\subsection{Approximation properties for operator modules}
\label{ssec:moduleapproxprops}

The following definitions are given by Crann~\cite[Section 7]{Cra20}.

Let $X$ be a completely contractive Banach algebra. An operator space $A$ will be called a \emph{left operator module over $X$} if $A$ is a left $X$-module and the module action extends to a complete contraction $X \projtens A \to A$ (here $\projtens$ denotes the operator space projective tensor product).
If $\unitiz[X] \defeq X \oplus \CC$ denotes the unitisation of $X$ then the module action extends to a complete contraction $\unitiz[X] \projtens A \to A$ by $(x,c) \cdot a \defeq x \cdot a + ca$ ($x \in X,\ a \in A,\ c \in \CC$).
We will often omit the adjective ``operator'' in the rest of the paper, for example writing left module over $X$ in place of left operator module over $X$.
Write $X - \modu$ for the category of left $X$-modules with completely bounded module maps as morphisms.

\begin{definition}\label{de:nuclearmodmaps}
Let $X$ be a completely contractive Banach algebra and $A,B$ be left modules over $X$ (respectively, left modules over $X$ which are dual spaces).
A map $\theta : A \to B$ is called \emph{nuclear in $X - \modu$} (respectively \emph{weakly nuclear in $X - \modu$}) if there are morphisms: 
\[
	\varphi_k : A \to M_{n_k}(\unitiz[X]^*) , \quad \psi_k : M_{n_k}(\unitiz[X]^*) \to B ,
\]
such that $\psi_k \circ \varphi_k$ converges to $\theta$ in the point-norm (respectively, the point-weak*) topology.
\end{definition}

The following definitions are natural generalisations of the usual ones for operator spaces.

\begin{definition}\label{de:semidiscinjmodule}
Let $X$ be a completely contractive Banach algebra and $A$ be a left module over $X$. 
We say $A$ is \emph{nuclear in $X - \modu$} if the identity map $\id_A : A \to A$ is nuclear in $X - \modu$.
If $A$ is also a dual space then we say $A$ is \emph{semidiscrete in $X - \modu$} if the identity map $\id_A$ is weakly nuclear in $X - \modu$.
Finally, we say $A$ is \emph{injective in $X - \modu$} if, for any two $X$-modules $B$ and $C$, any morphism $\phi : B \to A$ and any completely isometric morphism $\kappa : B \to C$, there is a morphism $\tilde{\phi} : C \to A$ such that $\phi = \tilde{\phi} \circ \kappa$.
\end{definition}


%
%

We will make repeated use of the obvious fact that injectivity in $X - \modu$ is preserved under taking conditional expectations which are $X$-module maps.

The following results are known; we record them here for later use.

\begin{lemma}\label{le:injectivemodulesfacts}
\begin{enumerate}[(i)]
	\item For any locally compact group $\grp$ the algebra $\Bd[L^2(\grp)]$ is injective in $\falg - \modu$.
	\item Suppose that $M$ is an injective von~Neumann algebra. Then $\Bd[L^2(\grp)] \btens M$, endowed with the left $\falg$-module structure induced by that on $\Bd[L^2(\grp)]$, is injective in $\falg - \modu$. 
\end{enumerate}
\end{lemma}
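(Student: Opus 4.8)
The plan is to prove (i) directly and then deduce (ii) from (i) by a tensor-product and conditional-expectation argument.

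For part (i), I would realise the $\falg$-module structure on $\Bd[L^2(\grp)]$ via the coaction $x \mapsto U^*(1 \otimes x)U$, where $U \in \vN \btens L^\infty(\grp)$ is the multiplicative unitary from the excerpt, so that $u * x = (u \otimes \id)U^*(1\otimes x)U$. The key point is that this coaction is implemented by a unitary: conjugation by $U$ gives a completely isometric $\falg$-module isomorphism from $\Bd[L^2(\grp)]$ onto its image inside $\vN \btens \Bd[L^2(\grp)] \subset \Bd[L^2(\grp) \otimes L^2(\grp)]$, where the latter carries the module structure $u * y = (u \otimes \id)(y)$ coming from the leg on the first factor. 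Now $\vN \btens \Bd[L^2(\grp)]$ with this ``slice the first leg'' module structure is injective in $\falg - \modu$: one checks that the slicing maps $u \mapsto (u \otimes \id)$ exhibit a module conditional expectation, or more concretely that $\Bd[L^2(\grp)\otimes L^2(\grp)] = \CB[\falg, \Bd[L^2(\grp)\otimes L^2(\grp)]]\text{-ish}$ structure makes the evaluation-type maps into module morphisms splitting any embedding. The cleanest route: identify $\CB[\falg, \Bd[L^2(\grp)]] \cong \vN \btens \Bd[L^2(\grp)]$ as in Remark~\ref{re:inclusionDeltaandcoaction}, observe that for any completely isometric module map $\kappa : B \hookrightarrow C$ and module map $\phi : B \to \Bd[L^2(\grp)]$ one first lifts $\phi$ as a plain completely bounded map (using injectivity of $\Bd[L^2(\grp)]$ as an operator space, Arveson), and then averages the lift against the module structure using the coaction to force it to be a module map. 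This averaging is the standard trick: compose the non-equivariant lift with the coaction and a slice, exploiting coassociativity to land back in $\Bd[L^2(\grp)]$.

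For part (ii), given an injective von~Neumann algebra $M$, I would first note that $\Bd[L^2(\grp)] \btens M$ is, as an operator space, injective (tensor product of injective von~Neumann algebras, or $\Bd[L^2(\grp)] \btens M \cong \Bd[L^2(\grp) \otimes \HH]$-complemented when $M \subset \Bd[\HH]$ is injective). The module structure in the statement only involves the first leg, so the argument of part (i) applies verbatim with $\Bd[L^2(\grp)]$ replaced by $\Bd[L^2(\grp)] \btens M$: the coaction $x \otimes m \mapsto U^*(1 \otimes x)U \otimes m$ is again implemented by the unitary $U \otimes 1$, a non-equivariant completely bounded lift exists by operator-space injectivity of $\Bd[L^2(\grp)] \btens M$, and averaging against this coaction produces the required module splitting. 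Alternatively, and perhaps more slickly, one can write $\Bd[L^2(\grp)] \btens M$ as the range of an $\falg$-module conditional expectation from $\Bd[L^2(\grp)] \btens \Bd[\HH] = \Bd[L^2(\grp) \otimes \HH]$ (using the injective conditional expectation $\Bd[\HH] \to M$ tensored with the identity, which is a module map because the module action is entirely on the $L^2(\grp)$ leg), and then invoke part (i) applied to $\Bd[L^2(\grp) \otimes \HH] = \Bd[L^2(\grp')]$-type identification together with the remark that injectivity in $\falg - \modu$ passes to ranges of module conditional expectations.

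The main obstacle I anticipate is the averaging step that upgrades a plain Arveson-type extension to an $\falg$-module extension: one must check that slicing the coaction of the lifted map against $\falg$ really does produce a map that (a) still extends $\phi$ on $B$ and (b) is genuinely a module map, and this relies on coassociativity $(\id \otimes \pi_{\hat\alpha})\circ\pi_{\hat\alpha} = (\coact[\vN]\otimes\id)\circ\pi_{\hat\alpha}$ together with the fact that $\falg$ acts nondegenerately. If instead one takes the conditional-expectation route for part (ii), the obstacle is milder — just verifying that $\id \otimes E_M$ (where $E_M : \Bd[\HH] \to M$ is the injective conditional expectation) intertwines the module actions, which is immediate since those actions do not touch the second tensor leg — so I would likely present part (ii) that way and concentrate the real work in part (i).
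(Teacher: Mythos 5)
Your reduction of (ii) to (i) is correct and is exactly the paper's route: when $M \subset \Bd[\HH]$ is injective, the conditional expectation $\id \otimes E_M$ from $\Bd[L^2(\grp)] \btens \Bd[\HH] = \Bd[L^2(\grp) \otimes \HH]$ onto $\Bd[L^2(\grp)] \btens M$ intertwines the $\falg$-actions because those actions live entirely on the first leg, and injectivity in $\falg - \modu$ passes to ranges of module conditional expectations; the case $M = \Bd[\HH]$ is handled by running (i) on $L^2(\grp) \otimes \HH$ with the coaction implemented by $U \otimes 1$.

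The genuine gap is in part (i), precisely at the step you flag as the anticipated obstacle. ``Averaging the Arveson lift against the coaction'' is not yet an operation: composing a plain completely bounded lift $\Psi : C \to \Bd[L^2(\grp)]$ with the coaction lands you in $\vN \btens \Bd[L^2(\grp)] \cong \CB[\falg, \Bd[L^2(\grp)]]$, and to return to $\Bd[L^2(\grp)]$ you must slice the first leg with a \emph{state $m$ on $\vN$}; for the result to be an $\falg$-module map that still extends $\phi$, that state must be $\falg$-invariant, \textit{i.e.}\ satisfy $\dualp{u * x}{m} = u(e) \dualp{x}{m}$ for all $u \in \falg$, $x \in \vN$ (a topologically invariant mean on $\vN$). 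Coassociativity and nondegeneracy of the $\falg$-action do not manufacture such an $m$: slicing with a normal functional $u \in \falg$ merely produces $c \mapsto u * \Psi(c)$, which is neither a module map nor an extension of $\phi$. The existence of a topologically invariant mean on $\vN$ for \emph{every} locally compact group is a real theorem (Renaud), and it is exactly the ingredient the paper's proof supplies before invoking the Crann--Neufang argument: the invariant state yields a completely contractive $\falg$-module projection $\CB[\falg, \Bd[L^2(\grp)]] \to \Bd[L^2(\grp)]$ splitting the coaction (relative injectivity), which together with module injectivity of $\CB[\falg, X]$ for operator-space-injective $X$ gives (i). That this splitting is not formal can be seen by replacing $\Bd[L^2(\grp)]$ with $\vN$: there the analogous relative injectivity is \emph{equivalent} to inner amenability of $\grp$ and can fail. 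So your outline has the right shape, but without identifying and invoking the invariant mean the averaging step does not go through.
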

\begin{proof}
(i) This is essentially proved in \cite[Theorem 5.5]{CrNe16}. 
Indeed, \cite{Ren72} shows, for any locally compact group $\grp$, that $\vN$ has an $\falg$-invariant state. 
Then the same proof as \cite[Theorem 5.5]{CrNe16} implies the claim.

(ii) If $M = \Bd[\HH]$ this follows routinely from (i) (see \textit{e.g.}\ \cite[Proposition XV.3.2]{Tak03c}). The general case follows.
\end{proof}

\subsection{Amenability and injectivity}
\label{ssec:injectivity}

Recall that a von~Neumann algebra $M$ is called \emph{injective} if for all unital $C^*$-algebras $A$ with a unital inclusion $M \subset A$ there is a projection of norm 1 from $A$ to $M$.
Equivalently, if $M \subset \Bd$ there is a projection of norm 1 from $\Bd$ to $M$. 
We say that a crossed product $\vNcrs$ is \emph{relatively injective in $\falg - \modu$} if there is a morphism $\Phi : \CB[\falg,\vNcrs] \to \vNcrs$ which is an $\falg$-module map and satisfies $\Phi \circ \Delta = \id$.
When $M = \CC$ and $\alpha$ is trivial this definition reduces to relative injectivity of $\vN$ as used by Crann--Tanko~\cite{CrTa17}.

Recall that a locally compact group is called \emph{amenable} if there is a state on $L^\infty(\grp)$ which is invariant under the left translation action $\tau$ of $\grp$ on $L^\infty(\grp)$.
A locally compact group is called \emph{inner amenable} if there is a state on $L^\infty(\grp)$ which is invariant under the conjugation action of $\beta$ of $\grp$ on $L^\infty(\grp)$. Crann--Tanko~\cite[Proposition 3.2]{CrTa17} showed that inner amenability of $\grp$ is equivalent to the existence of a state on $\vN$ which is invariant under the conjugation action of $\grp$ on $\vN$; it is this condition which we generalise in Definition~\ref{de:inneramenableactrelinj}. 
Note that there is another notion of inner amenable locally compact group, introduced by Effros~\cite{Eff75}, which is different to the one just introduced because it excludes the inner invariant state $\delta_e$. 

We will mostly use amenability in the form of injectivity properties of operator algebras. Here we summarise what is known about amenable groups in the language of injectivity. 
A discrete group is amenable if and only if its group von~Neumann algebra is injective (in $\CC - \modu$).


For locally compact groups inner amenability of $\grp$ is equivalent to relative injectivity of $\vN$ in $\falg - \modu$.


Crann--Tanko~\cite[Theorem 3.4]{CrTa17} observe that Lau--Paterson's result \cite[Corollary 3.2]{LaPa91} can be phrased in this way, generalising the above results.

\begin{theorem}\label{th:laupat}
Let $\grp$ be a locally compact group.
The following are equivalent:
\begin{enumerate}[(i)]
	\item $\grp$ is amenable;
	\item $\vN$ is injective in $\falg - \modu$; 
	\item $\vN$ is relatively injective in $\falg - \modu$ and injective in $\CC - \modu$.
\end{enumerate}
\end{theorem}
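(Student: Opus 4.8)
The plan is to prove $(i) \Rightarrow (ii) \Rightarrow (iii) \Rightarrow (i)$, reducing the problem to the known Lau--Paterson result \cite[Corollary 3.2]{LaPa91} phrased in terms of module injectivity, together with the bookkeeping in Lemma~\ref{le:injectivemodulesfacts} and the definitions in Section~\ref{ssec:moduleapproxprops}. The first implication $(i) \Rightarrow (ii)$ is essentially the content of Lau--Paterson: if $\grp$ is amenable then there is an $\falg$-module projection (conditional expectation) from $\Bd[L^2(\grp)]$ onto $\vN$, because amenability gives an invariant mean which, via the standard averaging, produces a norm-one $\falg$-module map $\Bd[L^2(\grp)] \to \vN$ splitting the inclusion; combining this with injectivity of $\Bd[L^2(\grp)]$ in $\falg - \modu$ from Lemma~\ref{le:injectivemodulesfacts}(i) and the observation (recorded just before ``The following results are known'') that module injectivity passes to ranges of module conditional expectations, we get that $\vN$ is injective in $\falg - \modu$.

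For $(ii) \Rightarrow (iii)$: injectivity in $\falg - \modu$ trivially implies relative injectivity in $\falg - \modu$ (apply the defining lifting property to the completely isometric module inclusion $\Delta : \vN \hookrightarrow \CB[\falg,\vN]$ and the identity morphism $\vN \to \vN$, obtaining $\Phi$ with $\Phi \circ \Delta = \id$), and it also implies injectivity in $\CC - \modu$ since an $\falg$-module injective object is in particular injective as an operator space, hence $\vN$ is an injective von~Neumann algebra (every injective operator space that is a unital C*-algebra is an injective von~Neumann algebra, using that $\vN$ sits unitally inside $\Bd[L^2(\grp)]$ and the $\falg$-module lift restricts to a norm-one projection).

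The implication $(iii) \Rightarrow (i)$ is the substantive direction and is where I expect the main obstacle. The idea is: relative injectivity in $\falg - \modu$ gives an $\falg$-module map $\Phi : \CB[\falg,\vN] \to \vN$ with $\Phi \circ \Delta = \id$; by Remark~\ref{re:inclusionDeltaandcoaction} (with $M = \CC$, $\alpha$ trivial, so $\vNcrs = \vN$ and $\pi_{\hat\alpha} = \coact[\vN]$) this is a module splitting of the coproduct $\coact[\vN]$, which is exactly the condition Crann--Tanko identify as \emph{inner amenability} of $\grp$. On the other hand injectivity of $\vN$ in $\CC - \modu$ is injectivity of $\vN$ as a von~Neumann algebra. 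Lau--Paterson's theorem states precisely that an inner amenable locally compact group with injective group von~Neumann algebra is amenable, so combining the two halves of $(iii)$ yields $(i)$. The hard part is correctly matching our module-theoretic formulation of relative injectivity to Lau--Paterson's measure-theoretic inner amenability; this is where one invokes Crann--Tanko~\cite[Proposition 3.2, Theorem 3.4]{CrTa17}, which already carry out this translation, so in the write-up this step is a citation rather than a fresh argument. The remaining routine points — that $\Delta$ is completely isometric, that the unitisation $\unitiz[\falg]$ does not interfere, and that the various module structures agree under the identification of Remark~\ref{re:inclusionDeltaandcoaction} — are straightforward and I would not dwell on them.
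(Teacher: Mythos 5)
Your proposal is correct and follows essentially the same route as the paper, which in fact offers no proof of its own for this statement: Theorem~\ref{th:laupat} is quoted directly from Crann--Tanko~\cite[Theorem 3.4]{CrTa17} as their module-theoretic rephrasing of Lau--Paterson~\cite[Corollary 3.2]{LaPa91}. Your sketch of $(i)\Rightarrow(ii)\Rightarrow(iii)$ is the standard bookkeeping (with the minor caveat that ``injective in $\falg-\modu$ implies injective as an operator space'' is not formal but is repaired exactly as you say, by restricting the module lift along $\vN\subset\Bd[L^2(\grp)]$ to get a norm-one projection), and the substantive direction $(iii)\Rightarrow(i)$ is delegated to the same citations the paper relies on.
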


Our aim is to generalise this to crossed products.
Crann--Tanko~\cite[Theorem 3.4]{CrTa17} show how relative module injectivity is precisely what is needed for averaging arguments to work. 
This paper began with the idea to look for similar averaging techniques for crossed products.

\section{Amenable Actions on von Neumann Algebras}
\label{sec:amenablevNcase}

Recall Anantharaman-Delaroche's definition of an amenable action \cite{AD79}.  

\begin{definition}\label{de:ADamenableaction}
Let $\alpha$ be an action of a locally compact group $G$ on a von~Neumann algebra $M$.
We say that $\alpha$ is \emph{amenable} if there is a projection of norm $1$
\[
	P : L^\infty(G) \btens M \to M \ \text{such that $P \circ (\tau_r \otimes \alpha_r) = \alpha_r \circ P, \quad r \in G$.}
\]
That is, there is a $\grp$-equivariant conditional expectation from $L^\infty(\grp) \btens M$ to $M$.
Here $\tau$ is the natural action of $\grp$ on $L^\infty(\grp)$ by left translation.
\end{definition}

When $M = \CC$ and the action is trivial the projection $P$ is a left-invariant mean, so $G$ is amenable.

We now aim to improve \cite[Proposition 3.11]{AD79} by accounting for the $\falg$-module structure.
Recall that the coproduct on $\vN$ is unitarily implemented, so extends to $\Bd[L^2(\grp)]$ where it induces a module action of $\falg$.

\begin{proposition}\label{pr:ADprop311}
Let $\alpha$ be an action of $\grp$ on $M$.
The following are equivalent:
\begin{enumerate}[(i)]
	\item $\alpha$ is an amenable action;
	\item there is a norm 1 projection $\Bd[L^2(\grp)] \btens M \to \vNcrs$ which is an $\falg$-module map.
\end{enumerate}
\end{proposition}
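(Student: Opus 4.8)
The plan is to prove both implications by transporting a conditional expectation along the $\falg$-module maps furnished by Lemma~\ref{le:equivariantmapextends}, using Remark~\ref{re:inclusionDeltaandcoaction} to keep track of where $\vNcrs$ sits inside $\Bd[L^2(\grp)] \btens M$.

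For (i) $\Rightarrow$ (ii): Suppose $P : \Linf \btens M \to M$ is a $\grp$-equivariant conditional expectation, where $\Linf$ carries the translation action $\tau$ and $\Linf \btens M$ carries $\tau \otimes \alpha$. Apply Lemma~\ref{le:equivariantmapextends} to the $\grp$-equivariant map $P$ (viewed as a map between the $w^*$-dynamical systems $(\Linf \btens M, \grp, \tau\otimes\alpha)$ and $(M,\grp,\alpha)$): this yields an $\falg$-module map $\tilde{P} : \Bd[L^2(\grp)] \btens (\Linf \btens M) \to \Bd[L^2(\grp)] \btens M$ which restricts to an $\falg$-module map $\vNcros{(\Linf \btens M)}{\grp}{\tau\otimes\alpha} \to \vNcrs$. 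Now $\Bd[L^2(\grp)] \btens (\Linf \btens M) \cong \big(\Bd[L^2(\grp)] \btens \Linf\big) \btens M$, and $\Bd[L^2(\grp)] \btens \Linf$ contains $\vN \btens \Linf$; in particular the multiplicative unitary $U$ and the comparison of the two copies of $\Linf$ (one acted on by $\lambda$-conjugation, one by $\tau$) identify $\Bd[L^2(\grp)] \btens M$ as sitting inside the larger algebra in such a way that the restriction of $\tilde{P}$ gives an $\falg$-module projection onto a copy of $\vNcrs$. Concretely, one checks that $\pi_\alpha(M)$ and $\vN \otimes \CC$ land in the crossed-product subalgebra, and since $\tilde{P}$ is the identity on the $\vN \otimes \CC$ coordinate and implements $P$ on the remaining $\Linf \btens M$ coordinates, it is idempotent and norm $1$ onto $\vNcrs$, as required. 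The composition with the natural inclusion $\Bd[L^2(\grp)]\btens M \hookrightarrow \Bd[L^2(\grp)] \btens \Linf \btens M$ (coming from the unit of $\Linf$, or equivalently the dual coaction picture) gives the desired projection $\Bd[L^2(\grp)]\btens M \to \vNcrs$; that this inclusion is $\falg$-equivariant follows from Remark~\ref{re:inclusionDeltaandcoaction}.

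For (ii) $\Rightarrow$ (i): Given a norm $1$ $\falg$-module projection $Q : \Bd[L^2(\grp)] \btens M \to \vNcrs$, I would slice it against a vector state on the $\Bd[L^2(\grp)]$-leg to descend to $M$. More precisely, use the canonical slice map $R_\tau : \Bd[L^2(\grp)] \btens M \to M$ associated to a fixed normal state, or better, exploit the fact that $\vNcrs$ carries the dual coaction $\pi_{\hat\alpha}$ and that the fixed-point algebra of the dual coaction is $\pi_\alpha(M) \cong M$; composing $Q$ with the canonical conditional expectation $\vNcrs \to \pi_\alpha(M)$ coming from a left-invariant mean only exists if $\grp$ is amenable, so instead I would restrict $Q$ to the subalgebra $\Linf \btens M \subset \Bd[L^2(\grp)] \btens M$ (where $\Linf$ acts by multiplication operators) and then conjugate/identify so that the image lands in $M = \pi_\alpha(M)^{\text{appropriate corner}}$. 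The key computation is that the $\falg$-module property of $Q$ forces the requisite $\grp$-equivariance: $u * (-)$ on $\Bd[L^2(\grp)] \btens M$ restricts on $\Linf \btens M$ to something intertwining with $\tau_r \otimes \alpha_r$ as $u$ ranges over $\falg$, and pairing against the appropriate functionals recovers exactly the intertwining relation $P \circ (\tau_r \otimes \alpha_r) = \alpha_r \circ P$ in Definition~\ref{de:ADamenableaction}.

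The main obstacle is the bookkeeping in (i) $\Rightarrow$ (ii): correctly identifying the copy of $\vNcrs$ inside $\Bd[L^2(\grp)] \btens \Linf \btens M$ and verifying that $\tilde{P}$ genuinely restricts to a \emph{projection} onto it (idempotency and range), as opposed to merely a norm $1$ map into it. This requires unwinding the definition of $\pi_\alpha$ through the extra $\Linf$ factor and checking compatibility of the two module actions via the multiplicative unitary $U$; once the identifications are pinned down the $\falg$-module property is automatic from Lemma~\ref{le:equivariantmapextends}. The reverse direction is comparatively soft, the only subtlety being to choose the slicing functional so that the resulting map on $M$ is unital and idempotent.
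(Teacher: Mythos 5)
Your overall strategy is the right one --- extend $P$ to $\tilde P = \id \otimes P$ via Lemma~\ref{le:equivariantmapextends} and transport the resulting projection between crossed products back to a projection $\Bd[L^2(\grp)] \btens M \to \vNcrs$ --- but the step you defer as ``bookkeeping'' is the crux of the argument, and the concrete route you sketch for it fails. The paper closes this gap with the Takai-type duality isomorphism of \cite[Lemme 3.10]{AD79}, which identifies $\vNcros{\big(\Linf \btens M\big)}{\grp}{\tau\otimes\alpha}$ with $\Bd[L^2(\grp)]\btens M$ and, crucially, carries $\lreg_r \otimes \id_{\Linf \btens M}$ to $\lreg_r \otimes \id_M$, so that it intertwines the two $\falg$-module structures; the desired projection is the restriction of $\tilde P$ to the crossed product, read through this isomorphism. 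Your proposed substitute --- precomposing $\tilde P$ with the inclusion $\Bd[L^2(\grp)]\btens M \hookrightarrow \Bd[L^2(\grp)]\btens \Linf \btens M$ given by the unit of $\Linf$ --- does not work: since $P$ is a conditional expectation, $\tilde P(y \otimes 1 \otimes m) = y \otimes P(1\otimes m) = y \otimes m$, so that composite is the identity map on $\Bd[L^2(\grp)]\btens M$, not a projection onto $\vNcrs$. Without the duality isomorphism (or an equivalent explicit spatial identification) the implication (i)$\implies$(ii) is not established.

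The converse direction is also incomplete. Your instinct to restrict $Q$ to $\Linf\btens M$ is correct, but the two decisive computations are missing. First, for $x \in \Linf \btens M$ one has $u * x = u(e)x$ for every $u \in \falg$, hence $u * Q(x) = Q(u*x) = u(e)Q(x)$; since the only elements of $\vNcrs$ with this property for all $u$ are those of $\piact(M)$, the restriction of $Q$ automatically lands in $\piact(M) \cong M$ --- no slicing against a vector state is needed, and such a slice would not obviously yield an equivariant map. Second, equivariance is not obtained by ``pairing against functionals'': one again passes through \cite[Lemme 3.10]{AD79} to view $Q$ as a map $\vNcros{(\Linf\btens M)}{\grp}{\tau\otimes\alpha} \to \vNcros{(\CC\btens M)}{\grp}{\id\otimes\alpha}$, and then combines Tomiyama's theorem (a norm-one projection onto a von~Neumann subalgebra is a bimodule map) with the covariance relation $\piact[\tau\otimes\alpha]\big((\tau_r\otimes\alpha_r)x\big) = \lrega_r\,\piact[\tau\otimes\alpha](x)\,\lrega_r^*$ to conclude $Q \circ (\tau_r\otimes\alpha_r) = \alpha_r \circ Q$. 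The missing ingredient in both directions is thus the same: the module-compatible identification of $\vNcros{(\Linf\btens M)}{\grp}{\tau\otimes\alpha}$ with $\Bd[L^2(\grp)]\btens M$.
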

\begin{proof} 
(i)$\implies$(ii) Let $P : L^\infty(\grp) \btens M \to M$ be the norm 1 equivariant projection from the definition of an amenable action.
By Lemma~\ref{le:equivariantmapextends} $P$ extends to an $\falg$-module map $\tilde{P} : \Bd[L^2(\grp)] \btens L^\infty(\grp) \btens M \to \Bd[L^2(\grp)] \btens M$, which restricts to a norm one $\falg$-module projection
\[
	P_\alpha : \vNcros{\big( L^\infty(G) \btens M \big)}{\grp}{\tau \otimes \alpha} \to \vNcrs .
\]
By \cite[Lemme 3.10]{AD79} $\vNcros{\big( L^\infty(\grp) \btens M \big)}{\grp}{\tau \otimes \alpha}$ is isomorphic to $\Bd[L^2(\grp)] \btens M$, so $P_\alpha$ is identified with a norm 1 projection $\Bd[L^2(\grp)] \btens M \to \vNcrs$.
Moreover, for each $r \in \grp$ the isomorphism in \cite[Lemme 3.10]{AD79} identifies $\lreg_r \otimes \id_{L^\infty(\grp) \btens M} \in \vNcros{\big( L^\infty(\grp) \btens M \big)}{\grp}{\tau \otimes \alpha}$ with $\lreg_r \otimes \id_M \in \Bd[L^2(\grp)] \btens M$, so it follows that the natural $\falg$-module structure on the domain of $P_\alpha$ is transformed under this isomorphism to the natural $\falg$-module structure on $\Bd[L^2(\grp)] \btens M$. 
It follows that the map which corresponds to $P_\alpha$ under this identification is an $\falg$-module map.

(ii)$\implies$(i) Let $P_\alpha$ be the projection in (ii) and take $x \in L^\infty(\grp) \btens M$ and $u \in \falg$; we have
\[
	u * P_\alpha(x) = P_\alpha(u * x) = u(e) P_\alpha(x) .
\]
As this holds for all $u \in \falg$ we conclude $P_\alpha(x) \in \piact(M) \cong M$. 
To see that the restriction of $P_\alpha$ to $L^\infty(\grp) \btens M$ is equivariant identify $P_\alpha$ with the corresponding map
\[
	\vNcros{(L^\infty(\grp) \btens M)}{\grp}{\tau \otimes \alpha} \to \vNcros{(\CC \btens M)}{\grp}{\id \otimes \alpha}
\] 
using \cite[Lemme 3.10]{AD79}. 
Then, for $x \in L^\infty(\grp) \btens M$, 
\[
	P_\alpha \big( \piact[\tau \otimes \alpha] \big( (\tau_r \otimes \alpha_r)x \big) \big) = P_\alpha \big( \lrega_r \piact[\tau \otimes \alpha](x) \lrega_r^* \big) = \lrega_r P_\alpha \big( \piact[\tau \otimes \alpha](x) \big) \lrega_r^*
\]
by the bimodule property of $P_\alpha$. 
Since $P_\alpha ( \piact[\tau \otimes \alpha](x) ) \in \piact[\id \otimes \alpha](M)$ this shows that $P_\alpha$ is equivariant.
\end{proof}

The $\falg$-module structure allows us to recover some results for locally compact groups which are only proved for discrete groups in \cite{AD79}.

\begin{remark}\label{re:AGamenactdefrmks}
{\rm 
Anantharaman-Delaroche \cite[Proposition 3.6]{AD79} has shown that amenable groups always have amenable actions. 
There are also many examples of non-amenable groups which have amenable actions; this follows from \textit{e.g.}\ \cite[Theorem 5.8]{BCL17}, since there exist non-amenable exact groups.
}
\end{remark}

The following result is given by Anantharaman-Delaroche~\cite[Proposition 3.6]{AD79}. We will use the implication (ii)$\implies$(i) later in Theorem~\ref{th:LauPatinneramenactions}, so we give a proof of this implication using our techniques. 
For a von~Neumann algebra $M$ we write $\centre$ for the centre of $M$.

\begin{proposition}\label{pr:AGamenactamengrp}
Let $\alpha$ be an action of $\grp$ on $M$.
The following are equivalent:
\begin{enumerate}[(i)]
	\item $\grp$ is amenable;
	\item $\alpha$ is amenable and $\centre$ has a $\grp$-invariant state.
\end{enumerate}
\end{proposition}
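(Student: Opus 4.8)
The plan is to treat the two implications separately, with essentially all of the work in (ii)$\implies$(i); the forward implication (i)$\implies$(ii) is largely a matter of assembling known facts.

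For (i)$\implies$(ii): if $\grp$ is amenable then $\alpha$ is an amenable action by \cite[Proposition 3.6]{AD79} (see also Remark~\ref{re:AGamenactdefrmks}). To produce a $\grp$-invariant state on $\centre$ I would run the usual averaging argument: since each $\alpha_r$ is a $*$-automorphism of $M$ it carries $\centre$ onto $\centre$, so $\alpha$ restricts to a point-weak* continuous action on $\centre$; fixing any normal state $\phi$ on $\centre$, the function $r \mapsto \phi(\alpha_{r\inv}(z))$ then belongs to $C_b(\grp)$ for every $z \in \centre$, and composing with a left-invariant mean on $\grp$ yields a $\grp$-invariant state on $\centre$.

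For (ii)$\implies$(i), let $P : L^\infty(\grp) \btens M \to M$ be the $\grp$-equivariant norm $1$ projection witnessing amenability of $\alpha$, and let $\mu$ be a $\grp$-invariant state on $\centre$. The first step is to note that $P(f \otimes 1_M) \in \centre$ for every $f \in L^\infty(\grp)$: by Tomiyama's theorem $P$ is an $M$-bimodule map for the embedding $a \mapsto 1 \otimes a$, so for all $a \in M$
\[
	a\, P(f \otimes 1_M) = P\big( (1 \otimes a)(f \otimes 1_M) \big) = P(f \otimes a) = P\big( (f \otimes 1_M)(1 \otimes a) \big) = P(f \otimes 1_M)\, a ,
\]
which forces $P(f \otimes 1_M)$ to lie in $\centre$. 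I would then define $\nu : L^\infty(\grp) \to \CC$ by $\nu(f) \defeq \mu\big( P(f \otimes 1_M) \big)$; this is a state since $P$ and $\mu$ are unital and positive. For left-invariance, $\alpha_r(1_M) = 1_M$ gives $\tau_r f \otimes 1_M = (\tau_r \otimes \alpha_r)(f \otimes 1_M)$, so equivariance of $P$ yields $P(\tau_r f \otimes 1_M) = \alpha_r\big( P(f \otimes 1_M) \big)$, and $\grp$-invariance of $\mu$ then gives $\nu(\tau_r f) = \nu(f)$. Hence $L^\infty(\grp)$ admits a left-invariant state and $\grp$ is amenable.

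I do not expect a genuine obstacle here: the two points needing a little care are that $\centre$ is globally $\alpha$-invariant (so that the restricted action is again point-weak* continuous) and the appeal to Tomiyama's theorem for the bimodule property of $P$. If one wishes instead to phrase (ii)$\implies$(i) inside the module framework of the paper, one could start from the $\falg$-module projection of Proposition~\ref{pr:ADprop311} rather than from $P$ directly, but the argument above seems the most economical.
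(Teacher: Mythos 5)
Your proof is correct, but it takes a genuinely different route from the paper for the implication (ii)$\implies$(i). You work directly with the equivariant conditional expectation $P : L^\infty(\grp) \btens M \to M$ from Definition~\ref{de:ADamenableaction}, use Tomiyama's theorem to place $P(f \otimes 1_M)$ in $\centre$, and compose with the invariant state to manufacture a left-invariant mean on $L^\infty(\grp)$ --- essentially Anantharaman-Delaroche's original, module-free argument. The paper instead deliberately runs the argument through its $\falg$-module machinery: it starts from the norm one $\falg$-module projection $P_\alpha : \Bd[L^2(\grp)] \btens M \to \vNcrs$ of Proposition~\ref{pr:ADprop311}, shows $P_\alpha(x \otimes 1_M)$ lands in $\vNcros{\centre}{\grp}{\alpha}$ via the same bimodule trick at the level of the crossed product, applies Lemma~\ref{le:equivariantmapextends} to the invariant state, and concludes via Theorem~\ref{th:laupat}. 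The paper is explicit about why it does this: the identical pattern is reused in Theorem~\ref{th:LauPatinneramenactions}, where the starting datum is an $\falg$-module projection $E : \Bd[L^2(\grp)] \btens \Bd[\HH_M] \to \vNcrs$ obtained from module injectivity, and no equivariant expectation on $L^\infty(\grp) \btens M$ is available a priori; your more economical argument would not transplant to that setting. For (i)$\implies$(ii) your averaging construction of the invariant state on $\centre$ is a correct, self-contained replacement for the paper's citation of \cite[Proposition 3.6]{AD79}; just note that the point-weak* continuity of $\alpha$ restricted to $\centre$ only guarantees $r \mapsto \phi(\alpha_{r\inv}(z))$ is bounded and continuous, so you should apply a left-invariant mean on $C_b(\grp)$ (which exists for amenable $\grp$) rather than on $L^\infty(\grp)$, exactly as you wrote.
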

\begin{proof}
(i)$\implies$(ii) See Remark~\ref{re:AGamenactdefrmks} and \cite[Proposition 3.6]{AD79}.

(ii)$\implies$(i) Let $P_\alpha : \Bd[L^2(\grp)] \btens M \to \vNcrs$ be a norm one projection.
For each $x \in \Bd[L^2(\grp)]$ we claim $P_\alpha(x \otimes 1_M) \in \vNcros{\centre}{\grp}{\alpha}$.
We need only verify the claim for $x \in L^\infty(\grp)$; in this case the bimodule property of $P_\alpha$ means that for $y \in \piact(M)$
\[
	y P_\alpha(x \otimes 1_M) = P_\alpha \big( y (x \otimes 1_M \big) = P_\alpha \big( (x \otimes 1_M) y \big) = P_\alpha(x \otimes 1_M) y ,
\] 
so $P_\alpha(x) \in \vNcrs \cap \piact(M)' \subset \vNcros{\centre}{\grp}{\alpha}$. 
Now let $\phi : \centre \to \CC$ be a $\grp$-invariant state, and $\tilde{\phi} : \vNcros{\centre}{\grp}{\alpha} \to \vN$ the corresponding map given by Lemma~\ref{le:equivariantmapextends}. 
The composition $\tilde{\phi} \circ P_\alpha$ is a norm one $\falg$-module projection from $\Bd[L^2(\grp)] \otimes \CC$ to $\vN$, so $\grp$ is amenable by Theorem~\ref{th:laupat}.
\end{proof}

Now we show how accounting for the $\falg$-module structure helps the study of injectivity of crossed products; this result generalises \cite[Proposition 3.12, Corollaire 4.2]{AD79}.
Part of this result was also obtained recently by Crann~\cite[Theorem 8.2]{Cra20} and Bearden--Crann~\cite[Theorem 5.2]{BeCr20b}.

\begin{theorem}\label{th:injectivitynewamenability}
Let $\alpha$ be an action of $\grp$ on $M$.
The following are equivalent:
\begin{enumerate}[(i)]
	\item $\alpha$ is amenable and $M$ is injective;
	\item $\vNcrs$ is injective in $\falg - \modu$.
\end{enumerate}
\end{theorem}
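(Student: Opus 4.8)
The plan is to prove the two implications separately, using Proposition~\ref{pr:ADprop311} to translate amenability of $\alpha$ into the existence of a norm one $\falg$-module projection $\Bd[L^2(\grp)] \btens M \to \vNcrs$.

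For (i)$\implies$(ii): assume $\alpha$ is amenable and $M$ is injective. By Lemma~\ref{le:injectivemodulesfacts}(ii), $\Bd[L^2(\grp)] \btens M$ is injective in $\falg - \modu$. By Proposition~\ref{pr:ADprop311} there is a norm one $\falg$-module projection $P_\alpha : \Bd[L^2(\grp)] \btens M \to \vNcrs$. Since injectivity in $X - \modu$ is preserved under taking conditional expectations which are $X$-module maps (noted in the excerpt immediately before Lemma~\ref{le:injectivemodulesfacts}), it follows that $\vNcrs$ is injective in $\falg - \modu$. This direction is essentially immediate.

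For (ii)$\implies$(i): assume $\vNcrs$ is injective in $\falg - \modu$. First I would extract ordinary injectivity of $M$. The idea is that the fixed-point algebra (or a corner) recovers $M$: one has the normal conditional expectation from $\vNcrs$ onto $\pi_\alpha(M)$ given by slicing against the trivial character $u \mapsto u(e)$ on $\falg$ — indeed for $u \in \falg$ and $x = \pi_\alpha(a)$ we have $u * \pi_\alpha(a) = u(e)\pi_\alpha(a)$, so $\pi_\alpha(M)$ is exactly the set of $\falg$-invariant elements, and there should be a conditional expectation $\vNcrs \to \pi_\alpha(M)$ which is automatically an $\falg$-module map onto the trivially-moduled $\pi_\alpha(M) \cong M$; hence $M$ is injective in $\falg - \modu$ with trivial module structure, which just says $M$ is an injective von~Neumann algebra. (If no such normal expectation is available directly, one instead uses that $\vNcrs$ injective implies $\vNcrs$ is an injective von~Neumann algebra, then $M$ embeds as a corner or with an expectation; this is the classical fact $M \subset \vNcrs$ with a conditional expectation.) Next, to get amenability of $\alpha$: since $\vNcrs$ is injective in $\falg - \modu$, and $\vNcrs \subset \Bd[L^2(\grp)] \btens M$ is an $\falg$-module inclusion (via the coaction / the inclusion $\Delta$ as in Remark~\ref{re:inclusionDeltaandcoaction}), injectivity gives an $\falg$-module map $\Bd[L^2(\grp)] \btens M \to \vNcrs$ extending the identity on $\vNcrs$; such a map is a norm one $\falg$-module projection, so by Proposition~\ref{pr:ADprop311} the action $\alpha$ is amenable.

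The main obstacle I expect is the careful treatment of injectivity of $M$ in the (ii)$\implies$(i) direction: one must produce an $\falg$-module conditional expectation onto a copy of $M$ and check the module structure on that copy is the trivial one, so that module injectivity collapses to ordinary injectivity. The slicing-against-$\delta_e$ map $u * (\cdot)$ is not by itself a conditional expectation onto $\pi_\alpha(M)$ unless one knows there is a normal projection $\vNcrs \to \pi_\alpha(M)$; so the clean route is to first observe $\vNcrs$ is an injective von~Neumann algebra (forgetting the module structure, using that the module-injective object is a norm one image of the injective $\Bd[L^2(\grp)]\btens M$), then use the standard conditional expectation onto $\pi_\alpha(M) \cong M$ to conclude $M$ is injective. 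I would also double-check that the inclusion $\vNcrs \hookrightarrow \Bd[L^2(\grp)] \btens M$ used for the extension step is genuinely a complete isometry of $\falg$-modules — this is exactly what Remark~\ref{re:inclusionDeltaandcoaction} and the module-action formulas set up, but it is worth stating explicitly, since the existence of the extension in Definition~\ref{de:semidiscinjmodule}-style injectivity requires a completely isometric module morphism.
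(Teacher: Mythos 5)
Your (i)$\implies$(ii) direction and the amenability half of (ii)$\implies$(i) coincide with the paper's argument and are fine. The gap is in how you extract injectivity of $M$ from (ii). You propose either to ``slice against the trivial character $u \mapsto u(e)$'' or to fall back on a ``classical'' conditional expectation $\vNcrs \to \piact(M)$. Neither is available for a general locally compact group. The functional $u \mapsto u(e)$ is a character on $\falg$, i.e.\ an element of $\vN = \falg^*$ (namely $\lambda_e$), not a functional on $\vN$, so it cannot be used to slice the first leg of $\pi_{\hat{\alpha}}(x) \in \vN \btens \vNcrs$. The canonical conditional expectation $\vNcrs \to \piact(M)$ is a \emph{discrete-group} phenomenon: there it is the slice against the canonical trace $\tau(\lambda_r) = \delta_{r,e}$ on $\vN$, which has no normal-state analogue when $\grp$ is nondiscrete, and for nondiscrete $\grp$ no such expectation exists in general --- producing one is essentially equivalent to what you are trying to prove. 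There is also a small circularity in your fallback: you describe $\vNcrs$ as a norm one image of ``the injective $\Bd[L^2(\grp)] \btens M$,'' but injectivity of $M$ is not yet known at that point; one must extend the identity to a projection defined on $\Bd[L^2(\grp)] \btens \Bd[\HH]$ instead.

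The paper closes this gap by using the module structure a second time, on a larger and automatically injective domain. Extend the identity to a norm one $\falg$-module projection $E : \Bd[L^2(\grp)] \btens \Bd[\HH] \to \vNcrs$ and restrict it to $L^\infty(\grp) \btens \Bd[\HH]$: there the module action satisfies $u * x = u(e)\, x$, so the module property gives $u * E(x) = E(u*x) = u(e) E(x)$ for every $u \in \falg$, and hence $E(x) \in \piact(M)$, the fixed points of the dual coaction. Thus $E$ restricts to a norm one projection from the injective von~Neumann algebra $L^\infty(\grp) \btens \Bd[\HH]$ onto $\piact(M) \cong M$, which yields injectivity of $M$ without ever invoking an expectation defined on $\vNcrs$ itself. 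Your observation that $\piact(M)$ is exactly the set of elements with $u * x = u(e)x$ is the right ingredient --- it just has to be applied to the restriction of the module projection $E$, not to a hypothetical expectation out of $\vNcrs$.
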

\begin{proof}
(i)$\implies$(ii) Since $M$ is injective we have $\Bd[L^2(\grp)] \btens M$ is injective in $\falg - \modu$ by Lemma~\ref{le:injectivemodulesfacts}.
By Proposition~\ref{pr:ADprop311} there is a norm 1 $\falg$-module projection $P_\alpha : \Bd[L^2(\grp)] \btens M \to \vNcrs$.
Hence (ii) holds.

(ii)$\implies$(i) It is routine to obtain a norm 1 $\falg$-module projection $E : \Bd[L^2(\grp)] \btens \Bd \to \vNcrs$. 
Clearly $E$ restricts to a norm 1 projection $\Bd[L^2(\grp)] \btens M \to \vNcrs$ giving amenability of $\alpha$. 
Take $x \in L^\infty(\grp) \btens \Bd$ and $u \in \falg$, so that $u * x = u(e) x$. 
Since $E$ is an $\falg$-module map $u * E(x) = E(u * x) = u(e) E(x)$; as this holds for all $u \in \falg$ we conclude that $E(x) \in \piact(M)$.
Thus $E$ restricts to a norm one projection $L^\infty(\grp) \btens \Bd \to \piact(M) \cong M$, so $M$ is injective.
\end{proof}

\section{Inner Amenable Actions on von Neumann Algebras}
\label{sec:inneramenableactions}

Recall that a locally compact group $\grp$ is called inner amenable if there is an inner-invariant state on $L^\infty(\grp)$, and that Crann--Tanko~\cite[Proposition 3.2]{CrTa17} showed that inner amenability of $\grp$ is equivalent to the existence of an inner-invariant state on $\vN$, \textit{i.e.}\ a state $\phi$ on $\vN$ satisfying $\phi(\beta_r(x)) = \phi(x)$ ($r \in G , x \in \vN$), where
\[
	\beta_r(x) \defeq \lreg_r x \lreg_r^* , \quad x \in \vN ,\ r \in \grp .
\]
It is this latter condition which we generalise to define inner amenable actions.

\begin{definition}\label{de:inneramenableactrelinj}
Let $(M,\grp,\alpha)$ be a $w^*$-dynamical system. 
We say $\alpha$ is \emph{inner amenable} if there is a projection of norm 1 
\[
    Q : \vN \btens M \to M \text{ such that $Q \circ (\beta_r \otimes \alpha_r) = \alpha_r \circ Q, \quad  r \in \grp$.}
\]
That is, there is a $\grp$-equivariant conditional expectation from $\vN \btens M$ to $M$.
\end{definition}

If $M = \CC$ and $\alpha$ is trivial then the equivariant projection $Q$ in Definition~\ref{de:inneramenableactrelinj} is an inner-invariant state on $\vN$, so the above definition reduces to inner amenability of $\grp$ by \cite[Proposition 3.2]{CrTa17}.

\begin{remark}\label{re:inneramengrpsact}
{\rm 
To see that inner amenable groups have inner amenable actions suppose that $(M,G,\alpha)$ is a $w^*$-dynamical system, and that $G$ is inner amenable.
Let $\phi : \vN \to \CC$ be a state which is invariant under the action $\beta$ and define
\[
	Q : \vN \btens M \to M ;\ Q(x) \defeq (\phi \otimes \id) (x) , \quad x \in \vN \btens M .
\]
It is easily seen that $Q$ satisfies Definition~\ref{de:inneramenableactrelinj}, hence $\alpha$ is inner amenable.
}
\end{remark}

We would like to know if there is a way to formulate our definition of inner amenable actions in terms of an equivariant projection on $L^\infty(\grp) \btens M$, generalising the original definition of inner amenable groups as the existence of an inner-invariant mean on $L^\infty(\grp)$.

In order to use inner amenable actions for averaging arguments we now show that Definition~\ref{de:inneramenableactrelinj} is equivalent to a crossed product version of relative injectivity, which is exactly the property required for the arguments in Theorem~\ref{th:LauPatinneramenactions} and 
Section~\ref{sec:approximation}.

\begin{proposition}\label{pr:innamenactrelinjequiv}
Let $(M,\grp,\alpha)$ be a $w^*$-dynamical system.
The following are equivalent:
\begin{enumerate}[(i)]
	\item $\alpha$ is inner amenable;
	\item $\vNcrs$ is relatively injective in $\falg - \modu$ (\textit{i.e.}\ there is a norm 1 $\falg$-module projection from $\vN \btens (\vNcrs)$ onto $\piact[\hat{\alpha}](\vNcrs)$).
\end{enumerate}
\end{proposition}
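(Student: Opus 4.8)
The plan is to mirror the proof of Proposition~\ref{pr:ADprop311}, but with the conjugation action $\beta \otimes \alpha$ on $\vN \btens M$ in place of the translation action $\tau \otimes \alpha$ on $\Linf \btens M$, and to make crucial use of Remark~\ref{re:inclusionDeltaandcoaction} to identify the inclusion $\Delta : \vNcrs \hookrightarrow \CB[\falg,\vNcrs] \cong \vN \btens(\vNcrs)$ with the coaction $\pi_{\hat\alpha}$, so that ``relative injectivity'' becomes exactly the statement that $\pi_{\hat\alpha}(\vNcrs)$ is the range of a norm $1$ $\falg$-module projection on $\vN \btens(\vNcrs)$.

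For (i)$\implies$(ii), start from the $\grp$-equivariant conditional expectation $Q : \vN \btens M \to M$, where $\vN$ carries the conjugation action $\beta$. Apply Lemma~\ref{le:equivariantmapextends} to $Q$ (with $(\vN \btens M, \grp, \beta \otimes \alpha)$ and $(M,\grp,\alpha)$ as the two $w^*$-dynamical systems) to get an $\falg$-module map $\tilde Q = \id \otimes Q$ which restricts to a norm $1$ $\falg$-module projection
\[
	Q_\alpha : \vNcros{\big(\vN \btens M\big)}{\grp}{\beta \otimes \alpha} \to \vNcrs .
\]
The key identification needed here is an analogue of \cite[Lemme 3.10]{AD79}: the crossed product $\vNcros{(\vN \btens M)}{\grp}{\beta \otimes \alpha}$ should be isomorphic to $\vN \btens(\vNcrs)$, via an isomorphism sending $\lreg_r \otimes \id_{\vN \btens M}$ to $\lreg_r \otimes (\lreg_r \otimes \id)$ (reflecting the $\lreg_r x \lreg_r^*$ in $\beta$), so that it intertwines the natural $\falg$-module structure on the source with the module structure on $\vN \btens(\vNcrs)$ induced from $\pi_{\hat\alpha}$ via Remark~\ref{re:inclusionDeltaandcoaction}. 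Granting this, $Q_\alpha$ transports to a norm $1$ $\falg$-module projection $\vN \btens(\vNcrs) \to \pi_{\hat\alpha}(\vNcrs)$, which is (ii).

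For (ii)$\implies$(i), given a norm $1$ $\falg$-module projection $\Phi : \vN \btens(\vNcrs) \to \pi_{\hat\alpha}(\vNcrs)$, restrict along the inclusions $\vN \btens M \hookrightarrow \vN \btens(\vNcrs)$ and $\pi_{\hat\alpha}(\vNcrs) \cong \vNcrs \supset \pi_\alpha(M) \cong M$; the $\falg$-module property forces the image to land in $M$ by the now-familiar argument (for $x \in \vN \btens M$ one has $u * x = u(e)x$ under the relevant module action, hence $u * \Phi(x) = \Phi(u*x) = u(e)\Phi(x)$, so $\Phi(x) \in \pi_\alpha(M)$), and the bimodule property of $\Phi$ over the copy of $\vN$ sitting inside $\vNcrs$ gives the required $\grp$-equivariance $Q \circ (\beta_r \otimes \alpha_r) = \alpha_r \circ Q$, exactly as in the $(ii)\implies(i)$ half of Proposition~\ref{pr:ADprop311}.

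The main obstacle is establishing the inner-amenable analogue of \cite[Lemme 3.10]{AD79}, namely the identification $\vNcros{(\vN \btens M)}{\grp}{\beta \otimes \alpha} \cong \vN \btens(\vNcrs)$ together with the compatibility of the module structures and of the generators $\lreg_r$. The translation case is a ``stabilisation''-type result whose proof uses that $(\Linf, \tau)$ is induced from the trivial action; here one needs the corresponding statement for the conjugation action on $\vN$, which should follow by a unitary Fubini-type argument (conjugating by the multiplicative unitary $U$, as already invoked for the module action on $\Bd[L^2(\grp)]$), but getting the module structures to match under Remark~\ref{re:inclusionDeltaandcoaction} is the delicate point. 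Everything else is a routine transcription of the amenable-action arguments.
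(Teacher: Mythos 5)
Your proposal is correct and follows essentially the same route as the paper: the paper establishes your ``analogue of Lemme 3.10'' by conjugating with $\sigma U^* \sigma$ (where $U$ is the multiplicative unitary, via the identities $V(x \otimes \id)V^* = \piact[\beta](x)$ and $V^*(x \otimes \id)V = \coact[\vN](x)$ for $V = \sigma U$), which identifies $\vNcros{(\vN \btens M)}{\grp}{\beta \otimes \alpha}$ with $\vN \btens (\vNcrs)$ and $\vNcros{(\CC \otimes M)}{\grp}{\beta\otimes\alpha}$ with $\piact[\hat{\alpha}](\vNcrs)$, exactly as you anticipate. The delicate point you flag is thus resolved precisely by the unitary-conjugation argument you propose, and both directions then proceed as in Proposition~\ref{pr:ADprop311}.
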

\begin{proof}
(i)$\implies$(ii) Let $U \in \vN \btens L^\infty(\grp)$ be the unitary on $L^2(\grp) \otimes L^2(\grp)$ given by $U \xi(s,t) \defeq \xi(t s ,t)$ and let $V \defeq \sigma U$, where $\sigma$ is the flip operator.
Routine calculations show that for $x \in \vN$
\[
	V (x \otimes \id_{\vN}) V^* = \piact[\beta](x) \quad \text{and} \quad V^* (x \otimes \id_{\vN}) V = \coact[\vN](x) .
\]
Thus we identify the $\falg$-modules $\vNcros{\vN}{\grp}{\beta}$ with $\vN \btens \vN$ by conjugating with $\sigma V^* = \sigma U^* \sigma$ (the $\falg$-module structure on $\vN \btens \vN$ comes from applying the coproduct to the left component).
Under this identification $\vN \otimes \CC \subset \vNcros{\vN}{\grp}{\beta}$ is identified with $\coact[{\vN}](\vN)$.

Similarly, if $M$ acts on the Hilbert space $\HH$, conjugating by the unitary $\sigma V^* \otimes \id_\HH$ we identify $\vNcros{(\vN \btens M)}{\grp}{{\beta \otimes \alpha}}$ with $(\sigma \otimes \id)(\vN \btens (\vNcrs))(\sigma \otimes \id)$.
Under this identification $\vNcros{(\CC \otimes M)}{\grp}{\beta \otimes \alpha}$ is identified with $\piact[\hat{\alpha}](\vNcrs)$.

Now let $Q : \vN \btens M \to M$ be the norm one equivariant projection from Definition~\ref{de:inneramenableactrelinj}, so the $\falg$-module map $\tilde{Q}$ given by Lemma~\ref{le:equivariantmapextends} gives a norm one $\falg$-module projection
\[
	(V \otimes \id_\HH)^* \tilde{Q} (V \otimes \id_\HH) : \vN \btens (\vNcrs) \to \piact[\hat{\alpha}](\vNcrs) ,
\]
as required.

(ii)$\implies$(i) Let $Q_\alpha$ denote the projection in (ii), and as in the first part of the proof above conjugate with the unitary $(V \otimes \id_\HH)$ to identify $Q_\alpha$ with a norm one $\falg$-module projection from $\vNcros{(\vN \btens M)}{\grp}{\beta \otimes \alpha}$ onto $\vNcros{(\CC \otimes M)}{\grp}{\id \otimes \alpha} \cong \vNcrs$.
The rest of the proof proceeds as in Proposition~\ref{pr:ADprop311}.
\end{proof}

Since a left-invariant mean on $L^\infty(\grp)$ is automatically two-sided invariant, all amenable groups are automatically inner amenable.
We can generalise this fact to actions on injective von~Neumann algebras.

\begin{proposition}\label{pr:AGamenimpliesAGinneramen}
Suppose that $(M,\grp,\alpha)$ is a $w^*$-dynamical system, with $M$ injective and $\alpha$ amenable. 
Then $\alpha$ is inner amenable.
\end{proposition}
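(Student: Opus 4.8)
The plan is to combine the two characterisations already at our disposal: Theorem~\ref{th:injectivitynewamenability}, which tells us that amenability of $\alpha$ together with injectivity of $M$ is equivalent to $\vNcrs$ being injective in $\falg - \modu$; and Proposition~\ref{pr:innamenactrelinjequiv}, which tells us that inner amenability of $\alpha$ is equivalent to $\vNcrs$ being \emph{relatively} injective in $\falg - \modu$. Since injectivity in a module category trivially implies relative injectivity (if $\vNcrs$ is injective in $\falg - \modu$, then the completely isometric $\falg$-module embedding $\Delta : \vNcrs \hookrightarrow \CB[\falg, \vNcrs] \cong \vN \btens (\vNcrs)$ splits, i.e.\ there is an $\falg$-module morphism $\Phi$ with $\Phi \circ \Delta = \id$, and one arranges this splitting to have norm $1$ in the usual way for conditional expectations), the result follows immediately by chaining these two equivalences.

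Concretely, I would first invoke Theorem~\ref{th:injectivitynewamenability}: the hypotheses that $M$ is injective and $\alpha$ is amenable give that $\vNcrs$ is injective in $\falg - \modu$. Next I would apply injectivity to the completely isometric $\falg$-module inclusion $\Delta : \vNcrs \hookrightarrow \vN \btens (\vNcrs)$ (using Remark~\ref{re:inclusionDeltaandcoaction} to identify $\CB[\falg, \vNcrs]$ with $\vN \btens (\vNcrs)$ and the image of $\Delta$ with $\piact[\hat{\alpha}](\vNcrs)$), together with the identity morphism $\id : \vNcrs \to \vNcrs$, to obtain an $\falg$-module morphism $\Phi : \vN \btens (\vNcrs) \to \vNcrs$ with $\Phi \circ \Delta = \id$. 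This is precisely the relative injectivity of $\vNcrs$ in $\falg - \modu$ as formulated in Proposition~\ref{pr:innamenactrelinjequiv}(ii), so that proposition yields that $\alpha$ is inner amenable.

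The one point deserving care is the norm: the definition of relative injectivity used in Proposition~\ref{pr:innamenactrelinjequiv} asks for a \emph{norm $1$} projection, whereas bare injectivity in $X - \modu$ only produces a completely bounded splitting. However, since $\Phi \circ \Delta = \id$ and $\Delta$ is a completely isometric $\ast$-homomorphism (a coaction), $\Phi$ restricted to the image $\piact[\hat{\alpha}](\vNcrs)$ is the identity; standard arguments (e.g.\ composing with an averaging/rescaling or using that a unital $\Phi$ fixing a $C^*$-subalgebra pointwise is automatically a conditional expectation via the module version of Tomiyama's theorem, as is invoked implicitly in the proofs of Proposition~\ref{pr:ADprop311} and Proposition~\ref{pr:innamenactrelinjequiv}) upgrade this to a norm $1$ $\falg$-module projection. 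I expect this normalisation step to be the only mild obstacle; the structural content is entirely carried by the two cited results, so the proof is short.
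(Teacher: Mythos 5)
Your proof is correct and takes essentially the same route as the paper's: apply Theorem~\ref{th:injectivitynewamenability} to get injectivity of $\vNcrs$ in $\falg - \modu$, deduce relative injectivity by splitting the inclusion $\Delta$, and conclude via Proposition~\ref{pr:innamenactrelinjequiv}. The paper compresses the middle step to ``it follows easily''; your extra care about upgrading the splitting to a norm-one $\falg$-module projection is a reasonable filling-in of that gap rather than a different argument.
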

\begin{proof}
Since $\alpha$ is amenable and $M$ is injective it follows from Theorem~\ref{th:injectivitynewamenability} that $\vNcrs$ is injective in $\falg - \modu$. 
It follows easily that $\vNcrs$ is relatively injective in $\falg - \modu$, so $\alpha$ is inner amenable.
\end{proof}

%

\begin{examples}\label{ex:innamenactnotinnamengrp}
{\rm 
\begin{enumerate}[(i)]
	\item The action $\tau$ of $\grp$ on $L^\infty(\grp)$ is always amenable \cite[Section 1.4]{AD82}, hence always inner amenable. So $\Bd[L^2(G)]$ is relatively injective in $\falg - \modu$. 
	\item Let $\grp$ denote a second-countable, connected, non-amenable locally compact group, for example $\grp = \mathrm{SL}(2, \RR)$. Such groups cannot be inner amenable, but they are exact \cite[Theorem 6.8]{KW99}.
Brodzki--Cave--Li~\cite[Theorem 5.8]{BCL17} have shown that such $\grp$ admit an amenable action, say $\alpha$, on a compact space $X$, so it follows from Proposition~\ref{pr:AGamenimpliesAGinneramen} that $\alpha$ is inner amenable.
\end{enumerate}
}
\end{examples}

By analogy with the class of exact groups, it may be interesting to study the class of locally compact groups which admit an inner amenable action on $L^\infty(X)$ for some compact space $X$.
By Remark~\ref{re:inneramengrpsact} this class contains all inner amenable groups, and the same argument as in Example~\ref{ex:innamenactnotinnamengrp} part (ii) shows this class contains all second-countable exact groups. 

To close this section we generalise a result of Lau--Paterson~\cite[Theorem 3.1]{LaPa91} to actions on noncommutative von~Neumann algebras.

\begin{theorem}\label{th:LauPatinneramenactions}
Let $(M,\grp,\alpha)$ be a $w^*$-dynamical system. 
The following are equivalent:
\begin{enumerate}[(i)]
	\item $\grp$ is amenable and $M$ is injective;
	\item $\vNcrs$ is injective, $\alpha$ is inner amenable and $\centre$ has a $\grp$-invariant state.
\end{enumerate}	
\end{theorem}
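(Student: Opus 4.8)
The plan is to deduce both implications from the results already established, chiefly Theorem~\ref{th:injectivitynewamenability}, Proposition~\ref{pr:innamenactrelinjequiv}, Proposition~\ref{pr:AGamenimpliesAGinneramen}, and Proposition~\ref{pr:AGamenactamengrp}, together with the standard fact that a von~Neumann algebra which is injective in $\falg-\modu$ is a fortiori injective in $\CC-\modu$ (forget the module structure) and is relatively injective in $\falg-\modu$.

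For (i)$\implies$(ii): assume $\grp$ is amenable and $M$ is injective. Amenability of $\grp$ forces $\alpha$ to be amenable by Anantharaman-Delaroche~\cite[Proposition 3.6]{AD79} (as recalled in Remark~\ref{re:AGamenactdefrmks}), and it gives a $\grp$-invariant state on $L^\infty(\grp)$ which restricts (or can be composed with a $\grp$-equivariant conditional expectation onto the centre, using injectivity of $M$) to a $\grp$-invariant state on $\centre$. Since $\alpha$ is amenable and $M$ injective, Theorem~\ref{th:injectivitynewamenability} shows $\vNcrs$ is injective in $\falg-\modu$, hence injective (as a von~Neumann algebra). Finally $\alpha$ is inner amenable by Proposition~\ref{pr:AGamenimpliesAGinneramen}. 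This gives (ii).

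For (ii)$\implies$(i): assume $\vNcrs$ is injective, $\alpha$ is inner amenable, and $\centre$ has a $\grp$-invariant state. The inner amenability of $\alpha$, via Proposition~\ref{pr:innamenactrelinjequiv}, gives a norm $1$ $\falg$-module projection from $\vN\btens(\vNcrs)$ onto $\piact[\hat\alpha](\vNcrs)\cong\vNcrs$; composing this with a norm $1$ projection from $\Bd[L^2(\grp)]\btens(\vNcrs)$ onto $\vN\btens(\vNcrs)$ (which exists and is an $\falg$-module map since $\vN$ is injective in $\falg-\modu$ by Lemma~\ref{le:injectivemodulesfacts}(i), tensored with the injective von~Neumann algebra $\vNcrs$, using Lemma~\ref{le:injectivemodulesfacts}(ii)), one obtains that $\vNcrs$ is injective in $\falg-\modu$. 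By Theorem~\ref{th:injectivitynewamenability} this means $\alpha$ is amenable and $M$ is injective. Now $\alpha$ is amenable and $\centre$ has a $\grp$-invariant state, so Proposition~\ref{pr:AGamenactamengrp} gives that $\grp$ is amenable. Together with injectivity of $M$ this is (i).

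The main obstacle is the bookkeeping in the (ii)$\implies$(i) direction: one must carefully produce the $\falg$-module projection $\Bd[L^2(\grp)]\btens(\vNcrs)\to\vNcrs$ by composing two projections, checking that the intermediate algebra $\vN\btens(\vNcrs)$ carries the correct $\falg$-module structure (the one coming from $\piact[\hat\alpha]$, as in Remark~\ref{re:inclusionDeltaandcoaction}) so that the projection from Proposition~\ref{pr:innamenactrelinjequiv} applies, and that the outer projection onto $\vN\btens(\vNcrs)$ is indeed an $\falg$-module map with respect to these structures --- this is where Lemma~\ref{le:injectivemodulesfacts} and the identification of $\vNcrs$ as an injective von~Neumann algebra are used in tandem. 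Once this composite projection is in hand, the rest is a direct appeal to the earlier theorems.
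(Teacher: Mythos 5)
Your direction (i)$\implies$(ii) is essentially the paper's (which simply combines Proposition~\ref{pr:AGamenactamengrp}, Theorem~\ref{th:injectivitynewamenability} and Remark~\ref{re:inneramengrpsact}), and the last step of your (ii)$\implies$(i) --- invoking Proposition~\ref{pr:AGamenactamengrp} once amenability of $\alpha$ and the invariant state on $\centre$ are in hand --- is a legitimate shortcut past the paper's explicit re-run of that argument. The genuine gap is in your ``upgrade'' step. You assert a norm one $\falg$-module projection from $\Bd[L^2(\grp)] \btens (\vNcrs)$ onto $\vN \btens (\vNcrs)$, justified by ``$\vN$ is injective in $\falg - \modu$ by Lemma~\ref{le:injectivemodulesfacts}(i)''. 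That lemma states that $\Bd[L^2(\grp)]$ is injective in $\falg - \modu$, not $\vN$; and by Theorem~\ref{th:laupat} injectivity of $\vN$ in $\falg - \modu$ is \emph{equivalent to amenability of $\grp$}, the very conclusion you are trying to reach, so the step is circular. Indeed, the tensor-free version of the projection you assert, namely a norm one $\falg$-module projection $\Bd[L^2(\grp)] \to \vN$, would exhibit $\vN$ as a module retract of the module-injective $\Bd[L^2(\grp)]$ and thereby prove every locally compact group amenable. The underlying error is categorical: injectivity of an ambient object never by itself yields a projection \emph{onto} a subobject; that is exactly what injectivity of the \emph{target} provides.

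The correct repair is the standard argument of \cite[Proposition 2.3]{Cra17}, which is what the paper cites at this point. Since $\vNcrs$ is injective as an operator space (this is the hypothesis ``$\vNcrs$ is injective''), the module $\CB[\falg, \vNcrs] \cong \vN \btens (\vNcrs)$ is injective in $\falg - \modu$; this uses only operator-space injectivity of the target $\vNcrs$, not any module injectivity of $\vN$. Relative injectivity, supplied by inner amenability of $\alpha$ via Proposition~\ref{pr:innamenactrelinjequiv}, then gives a morphism $\Phi : \CB[\falg,\vNcrs] \to \vNcrs$ with $\Phi \circ \Delta = \id$, so $\vNcrs$ is an $\falg$-module retract of a module-injective object and hence is injective in $\falg - \modu$. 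From there your appeals to Theorem~\ref{th:injectivitynewamenability} and Proposition~\ref{pr:AGamenactamengrp} go through as written.
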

\begin{proof}
(i)$\implies$(ii) Follows from Proposition~\ref{pr:AGamenactamengrp}, Theorem~\ref{th:injectivitynewamenability} and Remark~\ref{re:inneramengrpsact}.

(ii)$\implies$(i) Since $\alpha$ is inner amenable, by Proposition~\ref{pr:innamenactrelinjequiv}, we can upgrade injectivity of $\vNcrs$ to injectivity of $\vNcrs$ in $\falg - \modu$ as in \cite[Proposition 2.3]{Cra17}.
It follows that $M$ is injective as in the proof of Theorem~\ref{th:injectivitynewamenability}. 
To show $\grp$ is amenable we follow a similar idea to Proposition~\ref{pr:AGamenactamengrp}. 
Let $E : \Bd[L^2(\grp)] \btens \Bd[\HH_M] \to \vNcrs$ be a norm one $\falg$-module projection, and observe that for $x \in \Bd[L^2(\grp)]$ we have $E(x \otimes 1_M) \in \vNcros{\centre}{\grp}{\alpha}$, as in the proof of Proposition~\ref{pr:AGamenactamengrp}.
Let $\phi$ be a $\grp$-invariant state on $\centre$, and $\tilde{\phi} : \vNcros{\centre}{\grp}{\alpha} \to \vN$ the associated $\falg$-module map given by Lemma~\ref{le:equivariantmapextends}.
The composition $\tilde{\phi} \circ E$ is a norm one $\falg$-module projection from $\Bd[L^2(\grp)] \otimes \CC$ to $\vN$, so $\grp$ is amenable by Theorem~\ref{th:laupat}.
\end{proof}

\section{A Sample Averaging Argument on Crossed Products}
\label{sec:approximation}

Our definition of inner amenable actions is designed to enable averaging arguments.
One example of such an argument has already occurred in the proof that (ii) implies (i) in Theorem~\ref{th:LauPatinneramenactions}, where inner amenability of the action allows us to obtain injectivity of the crossed product in $\falg - \modu$ from the assumption that the crossed product is injective in $\CC - \modu$. 
In the setting of $C^*$-algebra crossed products discreteness of the acting group is used in the same way in \cite[Proposition 3.4]{MSTT18}.
In this section we briefly give a further example of this technique.

\begin{lemma}\label{le:averaging}
Let $(M,\grp,\alpha)$ be a $w^*$-dynamical system. 
Suppose that $\alpha$ is inner amenable, with associated norm one $\falg$-module projection $Q_\alpha : \vN \btens \vNcrs \to \pi_{\hat{\alpha}}(\vNcrs)$, and let $\Phi : \vNcrs \to \vNcrs$ be a completely bounded map.
Then 
\[
	S_\Phi : \vNcrs \to \vNcrs ;\ S_\Phi \defeq \piact[\hat{\alpha}]\inv \circ Q_\alpha \circ (\id \otimes \Phi) \circ \piact[\hat{\alpha}] 
\] 
is a completely bounded $\falg$-module map, with $\nrm[S_\Phi]_\cb \leq \nrm[\Phi]_\cb$.
\end{lemma}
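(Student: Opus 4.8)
The plan is to verify three things about $S_\Phi$: that it is well-defined, that it is completely bounded with the claimed norm bound, and that it is an $\falg$-module map. For well-definedness I would note that $\piact[\hat{\alpha}] : \vNcrs \to \vN \btens (\vNcrs)$ is an injective normal $*$-homomorphism onto $\piact[\hat{\alpha}](\vNcrs)$, so $\piact[\hat{\alpha}]\inv$ makes sense as a map on that image; the composition $Q_\alpha \circ (\id \otimes \Phi) \circ \piact[\hat{\alpha}]$ takes $\vNcrs$ into $\piact[\hat{\alpha}](\vNcrs)$ precisely because $Q_\alpha$ is a projection onto $\piact[\hat{\alpha}](\vNcrs)$, so applying $\piact[\hat{\alpha}]\inv$ afterwards is legitimate and the whole composite lands back in $\vNcrs$.

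For the norm bound, I would simply chain the complete-bounded norms of the four maps in the composition: $\piact[\hat{\alpha}]$ is a $*$-homomorphism hence completely contractive, $\id \otimes \Phi$ has $\cb$-norm $\leq \nrm[\Phi]_\cb$ (the standard fact used already in Lemma~\ref{le:equivariantmapextends}), $Q_\alpha$ has norm $1$ hence (being a projection onto a von~Neumann subalgebra, a conditional expectation) is completely contractive, and $\piact[\hat{\alpha}]\inv$ restricted to $\piact[\hat{\alpha}](\vNcrs)$ is a $*$-isomorphism onto $\vNcrs$, hence completely isometric. Multiplying these gives $\nrm[S_\Phi]_\cb \leq \nrm[\Phi]_\cb$.

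The $\falg$-module property is the step that needs the most care, and I expect it to be the main point of the argument. Here one has to track how the module action $u * (-)$ interacts with each factor. The key observations are: $\piact[\hat{\alpha}]$ intertwines the module action on $\vNcrs$ with the action on $\vN \btens (\vNcrs)$ coming from the coproduct on the left $\vN$-leg (this is essentially Remark~\ref{re:inclusionDeltaandcoaction}, identifying $\Delta$ with $\piact[\hat{\alpha}]$); the map $\id \otimes \Phi$ commutes with the $\falg$-action on $\vN \btens (\vNcrs)$ since that action only involves the left leg while $\Phi$ acts on the right leg, so $(\id \otimes \Phi)$ is automatically an $\falg$-module map for this structure; $Q_\alpha$ is an $\falg$-module map by hypothesis; and finally $\piact[\hat{\alpha}]\inv$, as the inverse of the module-map $\piact[\hat{\alpha}]$ on its image, is again a module map. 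Composing module maps yields a module map, so $S_\Phi$ is an $\falg$-module map. The one subtlety to spell out is why the $\falg$-module structure on $\vN \btens (\vNcrs)$ that $\piact[\hat{\alpha}]$ intertwines with is exactly the one (left-leg coproduct action) for which $\id \otimes \Phi$ is trivially a module map and for which $Q_\alpha$ was assumed to be a module map; this is a consistency check that should follow from the definitions of the module action $u * x = (u \otimes \id)\piact[\hat{\alpha}](x)$ on $\vNcrs$ together with the coassociativity of $\piact[\hat{\alpha}]$, i.e. $(\id \otimes \piact[\hat{\alpha}])\circ \piact[\hat{\alpha}] = (\coact[\vN] \otimes \id)\circ \piact[\hat{\alpha}]$, so that $\piact[\hat{\alpha}](u * x) = u * \piact[\hat{\alpha}](x)$ with the right-hand action being the left-leg coproduct action.
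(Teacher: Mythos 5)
Your proposal is correct and follows essentially the same route as the paper's proof: the heart of the matter is the intertwining relation $\piact[\hat{\alpha}](u * x) = (u \otimes \id) * \piact[\hat{\alpha}](x)$ (which the paper states as "easily checked" and you correctly derive from coassociativity of the coaction), after which the module property follows by composing module maps exactly as you describe. Your additional remarks on well-definedness and on the cb-norm bound just flesh out steps the paper calls obvious.
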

\begin{proof}
Let $Q_\alpha : \vN \btens \vNcrs \to \pi_{\hat{\alpha}}(\vNcrs)$ be the $\falg$-module norm 1 projection given by inner amenability of $\alpha$ in Proposition~\ref{pr:innamenactrelinjequiv}.
It is easily checked that $\piact[\hat{\alpha}](u * x) = (u \otimes \id) * \piact[\hat{\alpha}](x)$ for all $x \in \vNcrs$. 
Then, for any $u \in \falg$ and $x \in \vNcrs$ we have
\[
\begin{split}
	S_\Phi(u*x) 
		&= \piact[\hat{\alpha}]\inv \circ Q_\alpha \circ (\id \otimes \Phi) \big( (u \otimes \id) * \piact[\hat{\alpha}](x) \big) \\
		&= \piact[\hat{\alpha}]\inv \circ Q_\alpha \circ \Big( (u \otimes \id) * \big( \id \otimes \Phi) \circ \piact[\hat{\alpha}](x) \big) \Big) \\
		&= u* \big(\pi_{\hat{\alpha}}\inv \circ Q_\alpha \circ (\id \otimes \Phi) \circ \pi_{\hat{\alpha}}(x)\big) 
		= u*S_\Phi(x) ,
\end{split}
\]
Hence $S_\Phi$ is an $\falg$-module map.	
The norm inequality is obvious. 
\end{proof}

A von~Neumann algebra $N$ is said to have the \emph{weak* completely bounded approximation property} (w*CBAP) if there exists a net of ultraweakly-continuous, finite-rank, completely bounded maps $(\Phi_i: N \to N)_i$ such that $\Phi_i \to \id_N$ in the point-ultraweak topology and a constant $C$ for which $\nrm[\Phi_i]_\cb \leq C$ for each $i$.
The Haagerup constant $\Lambda_\cb(N)$ is the infimum of those $C$ for which such a net $(\Phi_i)_i$ exists, and $\Lambda_\cb (N)= \infty$ if $N$ does not have the w*CBAP.

\begin{proposition}\label{pr:amenactCBAP}
Let $(M,\grp,\alpha)$ be a $w^*$-dynamical system. 
Consider the conditions:
\begin{enumerate}[(i)]
	\item $M$ has the weak* completely bounded approximation property;
	\item $\vNcrs$ has the weak* completely bounded approximation property.
\end{enumerate}
If $\alpha$ is amenable then (i) implies (ii).
If $\alpha$ is inner amenable then (ii) implies (i).
\end{proposition}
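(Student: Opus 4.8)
The plan is to prove the two implications separately, using the module structure of the crossed product over $\falg$ throughout. For the implication ``$\alpha$ amenable $\implies$ (i) gives (ii)'', I would start from a net $(\Phi_i : M \to M)_i$ of ultraweakly continuous, finite-rank, completely bounded maps witnessing the w*CBAP of $M$, with $\sup_i \nrm[\Phi_i]_\cb \leq C$ and $\Phi_i \to \id_M$ point-ultraweakly. First I would amplify each $\Phi_i$ to $\id \otimes \Phi_i : \Bd[L^2(\grp)] \btens M \to \Bd[L^2(\grp)] \btens M$, which is again completely bounded with the same cb-norm bound, ultraweakly continuous, and converges point-ultraweakly to the identity; it is not finite-rank, but composing on the left with a finite-rank map on the $\Bd[L^2(\grp)]$ leg (using the w*CBAP, in fact semidiscreteness, of $\Bd[L^2(\grp)]$) repairs this at the cost of absorbing a further constant into $C$. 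Then I would conjugate down to $\vNcrs$: since $\alpha$ is amenable, Proposition~\ref{pr:ADprop311} furnishes a norm-one $\falg$-module projection $P_\alpha : \Bd[L^2(\grp)] \btens M \to \vNcrs$; writing $\iota$ for the inclusion $\vNcrs \hookrightarrow \Bd[L^2(\grp)] \btens M$, the maps $\Psi_i \defeq P_\alpha \circ (\id \otimes \Phi_i) \circ \iota : \vNcrs \to \vNcrs$ are ultraweakly continuous, finite-rank (after the repair above), completely bounded with $\nrm[\Psi_i]_\cb \leq C$, and $\Psi_i \to P_\alpha \circ \iota = \id_{\vNcrs}$ point-ultraweakly because $P_\alpha$ is normal. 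This gives $\Lambda_\cb(\vNcrs) < \infty$, hence (ii).

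For the implication ``$\alpha$ inner amenable $\implies$ (ii) gives (i)'', I would start from a net $(\Phi_i : \vNcrs \to \vNcrs)_i$ witnessing the w*CBAP of the crossed product, with $\sup_i \nrm[\Phi_i]_\cb \leq C$ and $\Phi_i \to \id$ point-ultraweakly. The key tool is Lemma~\ref{le:averaging}: applying it to each $\Phi_i$ produces completely bounded $\falg$-module maps $S_{\Phi_i} = \piact[\hat{\alpha}]\inv \circ Q_\alpha \circ (\id \otimes \Phi_i) \circ \piact[\hat{\alpha}] : \vNcrs \to \vNcrs$ with $\nrm[S_{\Phi_i}]_\cb \leq \nrm[\Phi_i]_\cb \leq C$. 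Since $Q_\alpha$ and $\piact[\hat{\alpha}]$ are normal and $\Phi_i \to \id$ point-ultraweakly, one checks $S_{\Phi_i} \to \piact[\hat\alpha]\inv \circ Q_\alpha \circ \piact[\hat\alpha] = \id_{\vNcrs}$ point-ultraweakly; the $S_{\Phi_i}$ are ultraweakly continuous finite-rank maps. Now the point is that an $\falg$-module map on $\vNcrs$ restricts nicely to $M$: because $S_{\Phi_i}$ commutes with the $\falg$-action, for $x \in \piact(M)$ and $u \in \falg$ we get $u * S_{\Phi_i}(\piact(a)) = S_{\Phi_i}(u * \piact(a)) = u(e) S_{\Phi_i}(\piact(a))$, forcing $S_{\Phi_i}(\piact(M)) \subseteq \piact(M) \cong M$, exactly as in the proofs of Proposition~\ref{pr:ADprop311} and Theorem~\ref{th:injectivitynewamenability}. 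Restricting each $S_{\Phi_i}$ to $M$ yields ultraweakly continuous, finite-rank, completely bounded maps $M \to M$ with cb-norm $\leq C$ converging point-ultraweakly to $\id_M$, so $M$ has the w*CBAP.

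The main obstacle I anticipate is not the module bookkeeping — that follows the template of the earlier proofs — but the preservation of the relevant regularity properties of the maps under the operations performed: namely checking that $\id \otimes \Phi_i$ and the conjugated/averaged maps remain \emph{ultraweakly continuous} and can be arranged to be \emph{finite-rank} while keeping the cb-norm bounded. Ultraweak continuity is routine since every map in sight ($P_\alpha$, $Q_\alpha$, $\piact[\hat\alpha]$, amplifications) is normal, and composition of normal cb maps is normal. The finite-rank issue in the first implication is handled by first slicing the $\Bd[L^2(\grp)]$ leg through a finite-rank factorisation coming from semidiscreteness of $\Bd[L^2(\grp)]$ (equivalently Lemma~\ref{le:injectivemodulesfacts}(i) together with the standard fact that $\Bd[L^2(\grp)]$ is semidiscrete) before applying $\Phi_i$; in the second implication the composition of a finite-rank map with bounded maps on either side is automatically finite-rank, so no extra work is needed. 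The constant $C$ may increase by a bounded factor in the first implication due to this slicing, but this is harmless for the qualitative statement; one could track it to get an inequality relating the Haagerup constants, but that is not claimed here.
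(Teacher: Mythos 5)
Your treatment of the implication ``$\alpha$ inner amenable and (ii) $\implies$ (i)'' is essentially the paper's own proof: apply Lemma~\ref{le:averaging} to the net $(\Phi_i)$, note that the averaged maps $S_{\Phi_i}$ still implement the w*CBAP with the same constant bound, and use the $\falg$-module property to force $S_{\Phi_i}(\piact(M)) \subseteq \piact(M)$. That half is fine and matches the paper.

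The direction ``$\alpha$ amenable and (i) $\implies$ (ii)'' is where there is a genuine gap. The paper does not argue this directly at all; it simply cites Anantharaman-Delaroche \cite[4.10]{AD85}. Your proposed direct argument --- amplify to $\id \otimes \Phi_i$, repair the finite-rank property on the $\Bd[L^2(\grp)]$ leg, and compress by the projection $P_\alpha : \Bd[L^2(\grp)] \btens M \to \vNcrs$ of Proposition~\ref{pr:ADprop311} --- fails at the assertion that ``$\Psi_i \to P_\alpha \circ \iota = \id_{\vNcrs}$ point-ultraweakly because $P_\alpha$ is normal''. The projection $P_\alpha$ is \emph{not} normal in general: Definition~\ref{de:ADamenableaction} only provides a norm-one equivariant projection, and already in the case $M = \CC$ with $\grp$ amenable and non-compact the projection is an invariant mean on $L^\infty(\grp)$, which is never ultraweakly continuous. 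Consequently, from $(T_j \otimes \Phi_i)(x) \to x$ ultraweakly you cannot conclude $P_\alpha\bigl((T_j \otimes \Phi_i)(x)\bigr) \to P_\alpha(x) = x$. (The compressed maps do remain normal and finite rank, since their ranges involve $P_\alpha$ only at finitely many fixed elements, but the point-weak* convergence to the identity is exactly what is lost.) Concretely, for $\grp = \ZZ$ and $M = \CC$ your scheme would derive the w*CBAP of $\vN[\ZZ] \cong L^\infty(\mathbb{T})$ by pushing finite-rank maps on $\Bd[\ell^2(\ZZ)]$ through an invariant mean, which does not work; the genuine proof uses Fej\'er-type multipliers. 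This is the same obstruction acknowledged in Remark~\ref{re:remonnuclthm}(iii): an injectivity-type projection alone does not transfer approximation properties downward. The argument in \cite{AD85} instead builds, from amenability of $\alpha$, a net of \emph{normal} completely positive Herz--Schur-type maps on $\vNcrs$ coming from positive-type functions in $C_c(\grp,M)$, and these do converge to the identity point-weak*; some such replacement for the compression by $P_\alpha$ is unavoidable here.
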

\begin{proof}
(i)$\implies$(ii) This was shown by Anantharaman-Delaroche~\cite[4.10]{AD85}.
	
(ii)$\implies$(i) Let $Q_\alpha : \vN \btens \vNcrs \to \piact[\hat{\alpha}](\vNcrs)$ be the norm 1 $\falg$-module projection arising from inner amenability of $\alpha$ in  Proposition~\ref{pr:innamenactrelinjequiv}. 
Let $(\Phi_i : \vNcrs \to \vNcrs)_i$ be a net of maps which implement the w*CBAP of $\vNcrs$, and define
\[
	S_{\Phi_i} : \vNcrs \to \vNcrs ;\ S_{\Phi_i} \defeq \piact[\hat{\alpha}]\inv \circ Q_\alpha \circ (\Phi_i \otimes \id) \circ \pi_{\hat{\alpha}}
\]
as in Lemma~\ref{le:averaging}.
The maps $(S_{\Phi_i})_i$ satisfy $\nrm[S_{\Phi_i}]_\cb \leq \nrm[\Phi_i]_\cb$,  also implement the w*CBAP of $\vNcrs$, and they are $\falg$-module maps.
Since $u * (S_{\Phi_i}(x)) = S_{\Phi_i}(u * x)$ for all $x \in \vNcrs$ and all $u \in \falg$ we have $S_{\Phi_i}(\piact(M)) \subset \piact(M)$.
It follows that $\piact(M) \cong M$ has the w*CBAP and $\Lambda_\cb(M) \leq \Lambda_\cb(\vNcrs)$.
\end{proof}	

The above proof illustrates one of our motivations for defining inner amenable actions: inner amenability of the action allows us to ``average'' maps which implement an approximation property of $\rcrs$ into $\falg$-module maps which implement the approximation property.
The resulting $\falg$-module maps may be viewed as Herz--Schur multipliers, as in \cite{McK19}; this is the perspective adopted in \cite{MSTT18}, where averaging arguments are explicitly used to produce Herz--Schur multipliers.

\section{Amenable actions on $C^*$-algebras}
\label{sec:amenactCalgs}

In this section $(A,G,\alpha)$ will be a $C^*$-dynamical system. 
Anantharaman-Delaroche~\cite[D\'{e}finition 4.1]{AD87} gave a definition of amenability for actions of discrete groups on $C^*$-algebras: if $(A,\grp,\alpha)$ is a $C^*$-dynamical system with $\grp$ discrete then $\alpha$ is called \emph{amenable} if the corresponding double dual action $\alpha^{**}$ of $\grp$ on $A^{**}$ is amenable in the sense of Definition~\ref{de:ADamenableaction}. 
In \cite[Section 4]{AD87} it is shown that this definition has several nice properties. 

The work of Anantharaman-Delaroche leaves open the question of defining amenable actions of locally compact groups on $C^*$-algebras.
It is proposed in \cite[Section 9.2]{AD02} that one might define an action of $\grp$ on $A$ to be \emph{amenable} if $\crs = \rcrs$, that is, if the canonical quotient map $\crs \to \rcrs$ is an isomorphism, and several questions about the behaviour of this proposed definition are raised.
Suzuki~\cite{Suz19} constructed examples showing that $\crs = \rcrs$ does not satisfy the functoriality properties in \cite[Section 9.2]{AD02}.
We interpret the questions asked by Anantharaman-Delaroche as requirements for a sensible definition of an amenable action.

\begin{problem}\label{pro:amenactCalgqus}
Let $(A,\grp,\alpha)$ be a $C^*$-dynamical system, with $\grp$ locally compact. 
Give a definition of \emph{amenability of $\alpha$} which satisfies the following properties:
\begin{enumerate}
	\item if $A$ is nuclear and $\alpha$ is amenable then the crossed product $\crs$ and/or the reduced crossed product $\rcrs$ is also nuclear;
	\item if $\alpha$ is amenable and $H$ is a closed subgroup of $\grp$ then the restriction of $\alpha$ is an amenable action of $H$ on $A$;
	\item if $(B,\grp,\beta)$ is a $C^*$-dynamical system such that there is an equivariant $*$-homomorphism $\Phi : A \to \Mult(B)$ with $\Phi(A) B$ dense in $B$ and $\Phi(\centre[\Mult(A)]) \subset \centre[\Mult(B)]$ then amenability of $\alpha$ implies amenability of $\beta$;
	\item if $A$ is a simple $C^*$-algebra then amenability of $\alpha$ implies amenability of $\grp$;
	\item if $\alpha$ is amenable then the canonical quotient map $\crs \to \rcrs$ is an isomorphism.
\end{enumerate}
\end{problem}

We have added condition (5) to the list in \cite[Section 9.2]{AD02} since it will not be part of the definition of an amenable action below.
Versions of this problem have been considered by a number of authors; we note the recent work of Bearden--Crann~\cite{BeCr20a} and Buss--Echterhoff--Willett~\cite{BEW20a}, which targets (5) above and along the way addresses a number of the other properties.  
Our main contribution is to the nuclearity problem (1): we use $\falg$-modules to relate amenable actions and nuclearity of crossed products.
We also survey known results, showing that the notion of amenable action we use satisfies several of the points in Problem~\ref{pro:amenactCalgqus}.

We follow Anantharaman-Delaroche~\cite[D\'{e}finition 4.1]{AD87} in defining an action of $\grp$ on a $C^*$-algebra $A$ to be amenable if a suitable double dual action is amenable in the sense of Definition~\ref{de:ADamenableaction}. 
The double dual action $\alpha^{**}$ on $A^{**}$ can fail to be weak*-continuous \cite{Iku88}, but fortunately there is a suitable replacement, which was introduced by Ikunishi~\cite{Iku88}. 

\begin{definition}\label{de:univenvelGvNalg}
Let $(A,\grp,\alpha)$ be a $C^*$-dynamical system.
Denote by $(i_A,i_\grp)$ the covariant representation underlying the universal representation $i_A \rtimes i_\grp : \crs \to \Bd[\HH_u]$.
We define $\Aact \defeq i_A(A)'' \subset \Bd[\HH_u]$.
\end{definition}

This definition is not the original one given by Ikunishi; we refer to Buss--Echterhoff--Willett~\cite[Section 2]{BEW20a}, where it is shown that Definition~\ref{de:univenvelGvNalg} is equivalent to Ikunishi's definition, and several useful properties of this object are shown.
The von~Neumann algebra $\Aact$ carries an action of $\grp$ given by $\Ad i_\grp$; we will abuse notation and write $\alpha^{**}$ for this action, which gives rise to a $w^*$-dynamical system $(\Aact , \grp , \alpha^{**})$.
To define an amenable action we follow \cite[Definition 3.1]{BEW20a}, where such actions are called \emph{von~Neumann amenable}.

\begin{definition}\label{de:amenactCalgebra}
Let $(A,G,\alpha)$ be a $C^*$-dynamical system.
Say that $\alpha$ is \emph{amenable} if the corresponding universal action $\alpha^{**}$ of $G$ on $\Aact$ is amenable in the sense of Definition~\ref{de:ADamenableaction}.
\end{definition}

For actions of discrete groups $\Aact = A^{**}$ \cite[Section 2.2]{BEW20a}, so this definition extends the one given by Anantharaman-Delaroche~\cite[D\'{e}finition 4.1]{AD87} for discrete groups. 
The remainder of this section is concerned with investigating if Definition~\ref{de:amenactCalgebra} has the properties in Problem~\ref{pro:amenactCalgqus}. 

First we consider nuclearity of crossed products, and aim to generalise \cite[Th\'{e}or\`{e}me 4.5]{AD87}. 
The assumption that the action is inner amenable is discussed in Remark~\ref{re:remonnuclthm}(iii) below.

\begin{theorem}\label{th:amenCactnuclear}
Let $(A,\grp,\alpha)$ be a $C^*$-dynamical system, and suppose that $\alpha^{**}$ is an inner amenable action of $\grp$ on $\Aact$.
The following are equivalent:
\begin{enumerate}[(i)]
	\item $\crs$ is nuclear;
	\item $\rcrs$ is nuclear;
	\item $\vNcros{\Aact}{\grp}{\alpha^{**}}$ is injective in $\falg - \modu$;
	\item $\alpha$ is amenable and $\Aact$ is injective. 
\end{enumerate}
\end{theorem}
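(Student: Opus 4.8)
The plan is to prove the equivalence of (i)--(iv) by establishing a cycle of implications, leaning on the structural results already developed in the paper together with a few standard facts about the relationship between nuclearity of a $C^*$-algebra and injectivity (semidiscreteness) of its enveloping von~Neumann algebra. The central observation is that nuclearity of $\crs$ (respectively $\rcrs$) should be transferred to an injectivity-in-$\falg-\modu$ statement about the crossed product $w^*$-dynamical system $(\Aact,\grp,\alpha^{**})$, at which point Theorem~\ref{th:injectivitynewamenability} immediately gives the translation between condition (iii) and condition (iv). Concretely, I would aim for the scheme (i)$\implies$(iii)$\implies$(iv)$\implies$(ii)$\implies$(i), using the inner amenability hypothesis precisely at the step (ii)$\implies$(iii) to upgrade plain injectivity to module injectivity.

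First I would record the identification, which is essentially in Buss--Echterhoff--Willett~\cite{BEW20a} and already used implicitly in this section, that the bidual von~Neumann algebra of $\crs$ is $\vNcros{\Aact}{\grp}{\alpha^{**}}$ (and likewise the relevant weak closure coming from the reduced crossed product), so that $\crs$ is nuclear iff $(\crs)^{**}$ is injective, iff $\vNcros{\Aact}{\grp}{\alpha^{**}}$ is injective in $\CC-\modu$. This gives (i)$\iff$(ii) at the level of plain injectivity once one knows full and reduced crossed products have the same bidual up to the pieces that matter, which is where I would cite the universal-action machinery; alternatively the cycle below avoids needing (i)$\iff$(ii) directly. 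For (ii)$\implies$(iii): nuclearity of $\rcrs$ gives injectivity of $\vNcros{\Aact}{\grp}{\alpha^{**}}$ in $\CC-\modu$, and since $\alpha^{**}$ is inner amenable, Proposition~\ref{pr:innamenactrelinjequiv} supplies a norm one $\falg$-module projection onto $\piact[\hat{\alpha^{**}}]$ of the crossed product, i.e.\ relative injectivity in $\falg-\modu$; combining relative injectivity with injectivity in $\CC-\modu$ upgrades to injectivity in $\falg-\modu$ exactly as in the proof of Theorem~\ref{th:LauPatinneramenactions}(ii)$\implies$(i) (the argument of \cite[Proposition 2.3]{Cra17}). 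The step (iii)$\iff$(iv) is now a direct application of Theorem~\ref{th:injectivitynewamenability} to the $w^*$-dynamical system $(\Aact,\grp,\alpha^{**})$, together with the definition (Definition~\ref{de:amenactCalgebra}) of amenability of $\alpha$ as amenability of $\alpha^{**}$. Finally, (iv)$\implies$(i): if $\Aact$ is injective and $\alpha^{**}$ is amenable, then by Theorem~\ref{th:injectivitynewamenability} the crossed product $\vNcros{\Aact}{\grp}{\alpha^{**}}$ is injective in $\falg-\modu$, hence injective in $\CC-\modu$, hence $(\crs)^{**}$ is injective and semidiscrete, so $\crs$ is nuclear by the usual Choi--Effros / Connes-type correspondence between nuclearity and semidiscreteness of the bidual.

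The main obstacle I expect is the careful bookkeeping identifying the biduals (and the relevant von~Neumann completions) of $\crs$ and $\rcrs$ with $\vNcros{\Aact}{\grp}{\alpha^{**}}$ in a way that is compatible with the coaction $\pi_{\hat{\alpha^{**}}}$ and hence with the $\falg$-module structure. One must check that the quotient map $\crs\to\rcrs$ induces, at the level of the enveloping von~Neumann algebras, a map that is the identity on the part carrying the module structure, so that injectivity results can be shuttled between the full and reduced pictures; this is where the universal-action formalism of Ikunishi and of Buss--Echterhoff--Willett does the real work, and I would quote \cite[Section 2, Section 3]{BEW20a} rather than reproving it. A secondary point requiring care is that nuclearity of a $C^*$-algebra $B$ is equivalent to semidiscreteness (equivalently injectivity) of $B^{**}$, and one should make sure the ``$B^{**}$'' appearing here is genuinely the bidual and that the action $\alpha^{**}$ on it is replaced, when necessary, by the weak*-continuous universal action $\alpha^{**}$ on $\Aact$ without changing the crossed-product von~Neumann algebra; again this compatibility is exactly what Definition~\ref{de:univenvelGvNalg} and the surrounding discussion are set up to provide. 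Once these identifications are in hand, every implication in the cycle is a direct citation of Proposition~\ref{pr:innamenactrelinjequiv}, Theorem~\ref{th:injectivitynewamenability}, Theorem~\ref{th:LauPatinneramenactions}, and the classical nuclearity/semidiscreteness correspondence, with no substantial new computation.
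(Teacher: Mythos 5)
Your steps (i)$\implies$(ii)$\implies$(iii) and (iii)$\iff$(iv) match the paper's route: nuclearity passes to quotients, the identification $\vNcros{\Aact}{\grp}{\alpha^{**}} = \big((i_A)_\alpha\rtimes\lambda\big)^{**}\big((\rcrs)^{**}\big)$ from \cite[Remark 2.6]{BEW20a} yields plain injectivity of the von~Neumann crossed product, inner amenability of $\alpha^{**}$ together with Proposition~\ref{pr:innamenactrelinjequiv} and \cite[Proposition 2.3]{Cra17} upgrades this to injectivity in $\falg - \modu$, and Theorem~\ref{th:injectivitynewamenability} converts between (iii) and (iv). The problem is your closing step (iv)$\implies$(i).

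There you assert that injectivity of $\vNcros{\Aact}{\grp}{\alpha^{**}}$ in $\CC - \modu$ gives injectivity of $(\crs)^{**}$, hence nuclearity of $\crs$. But $\vNcros{\Aact}{\grp}{\alpha^{**}}$ is not the bidual of $\crs$ (nor of $\rcrs$): by the very identification you invoke, it is the image of $(\rcrs)^{**}$ under the normal extension of the regular representation, i.e.\ a corner of $(\rcrs)^{**}$ cut by a central projection, and $(\rcrs)^{**}$ is in turn a quotient of $(\crs)^{**}$. Injectivity passes \emph{down} to such quotients, not up, so injectivity of the von~Neumann crossed product says nothing about the summands of $(\crs)^{**}$ coming from non-regular covariant representations. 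This is not the ``bookkeeping'' you defer to the universal-action formalism; it is the entire content of the implication. The paper's proof instead fixes an arbitrary nondegenerate covariant pair $(\phi,\rho)$ and shows the commutant $(\phi\rtimes\rho)(\crs)' = \phi(A)'_\grp$ is injective: injectivity of $\Aact$ is pushed to $\phi(A)''$ (hence to $\phi(A)'$) via the equivariant surjection $\phi^{**}$ of \cite[Corollary 2.3]{BEW20a}, then amenability of $\alpha$ is converted into \emph{commutant amenability} (\cite[Theorem 3.6]{BeCr20a}, \cite[Proposition 5.9]{BEW20a}) to obtain an amenable action $\Ad\rho$ on $\phi(A)'$, and the resulting equivariant projection is used to average onto the fixed-point algebra $\phi(A)'_\grp$. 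Running this over all covariant representations is what gives injectivity of $(\crs)^{**}$; without some substitute for that representation-by-representation argument your cycle does not close.
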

\begin{proof}
(i)$\implies$(ii) Nuclearity passes to quotients.
	
(ii)$\implies$(iii) By hypothesis $(\rcrs)^{**}$ is injective (in $\CC - \modu$). Since $i_A$ is a faithful representation of $A$ we may form the covariant pair $((i_A)_\alpha , \lrega)$ as in equation (\ref{eq:covpair}). 
It is shown in \cite[Remark 2.6]{BEW20a} that 
\begin{equation}\label{eq:isomvNsecdualrcrs}
	\vNcros{\Aact}{\grp}{\alpha^{**}} = \big( (i_A)_\alpha \rtimes \lrega \big)^{**} \big( (\rcrs)^{**} \big),
\end{equation}
so it follows that $\vNcros{\Aact}{\grp}{\alpha^{**}}$ is injective (in $\CC - \modu$).
Since $\alpha$ is assumed to be inner amenable, $\vNcros{\Aact}{\grp}{\alpha^{**}}$ is relatively injective in $\falg - \modu$ by Proposition~\ref{pr:innamenactrelinjequiv}, so by a standard argument (see \cite[Proposition 2.3]{Cra17}) $\vNcros{\Aact}{\grp}{\alpha^{**}}$ is injective in $\falg - \modu$.

(iii)$\implies$(iv) This is shown in Theorem~\ref{th:injectivitynewamenability}.

(iv)$\implies$(i) Recent results allow us to adapt Anantharaman-Delaroche's proof \cite[Th\'{e}or\`{e}me 4.5]{AD87} to the locally compact case, as we now explain.
We will show that for a (non-degenerate) covariant representation $(\phi , \rho)$ of $(A,\grp,\alpha)$ on the Hilbert space $\HH$ the von~Neumann algebra $(\phi \rtimes \rho)(\crs)''$ is injective. In fact we will show the equivalent statement that the commutant $(\phi \rtimes \rho)(\crs)'$ is injective.
Observe (as in \cite[Th\'{e}or\`{e}me 4.5]{AD87}) that the latter is the space
\[
    \phi(A)'_\grp \defeq \{ x \in \phi(A)' : \rho_r x \rho_r^* = x \text{ for all $r \in \grp$} \} .
\]
Applying \cite[Corollary 2.3]{BEW20a} to the map $\phi : A \to \phi(A)$ we obtain a surjective, equivariant extension $\phi^{**} : \Aact \to \phi(A)''$. 
Since $\Aact$ is injective so is $\phi(A)''$, therefore also $\phi(A)'$.
Now, since $\alpha$ is amenable in our sense it is also amenable in the sense of \cite[Definition 3.4]{BEW20a}, by \cite[Theorem 3.6]{BeCr20a} (see also \cite[Section 8]{BEW20a}); therefore, by \cite[Proposition 5.9]{BEW20a} $\alpha$ is \emph{commutant amenable} in the sense of \cite[Definition 5.7]{BEW20a}.
Therefore, by \cite[Theorem 3.6]{BeCr20a}, the action $\Ad \rho$ of $\grp$ on $\phi(A)'$ given by 
\[
    \Ad \rho_r(x) \defeq \rho_r x \rho_r^* , \quad r \in \grp,\ x \in \phi(A)' ,
\]
is also amenable.
Let $R : L^\infty(\grp) \btens \phi(A)' \to \phi(A)'$ be the corresponding norm 1 equivariant projection and define
\[
    R_\grp : \phi(A)' \to \phi(A)'_\grp ;\ R_\grp(x) \defeq R(\hat{x}) , \quad x \in \phi(A)' ,
\]
where $\hat{x} \in  L^\infty(\grp) \btens \phi(A)'$ is defined by $\hat{x}(r) \defeq \Ad \rho_r(x)$.
Since $\phi(A)'$ is injective and $R_\grp$ is a norm 1 projection we have shown that $\phi(A)'_\grp$ is injective, as required.
%
%
\end{proof}	

\begin{remarks}\label{re:remonnuclthm}
{\rm 
\begin{enumerate}[(i)]
	\item The implication (iii)$\implies$(iv) fails if we do not account for the $\falg$-module structure: there exist locally compact groups, for example $\SL(2,\CC)$, for which the group von~Neumann algebra is injective but the group is not amenable \cite{Con76} (see also \cite[Remark 2.6.10]{BrO08}). Anantharaman-Delaroche works only with discrete groups, where this difficulty does not arise. See also Theorem~\ref{th:laupat}.
	\item Note that if $A$ is nuclear then $\Aact$ is injective, so condition (iv) above holds if $\alpha$ is amenable and $A$ is nuclear. If $\grp$ is discrete then $\Aact = A^{**}$, so (iv) is equivalent to \emph{(iv)' $\alpha$ is amenable and $A$ is nuclear}. In general we do not see any reason for (iv) and (iv)' to be equivalent.
	\item Our original goal was to prove a full generalisation of \cite[Th\'{e}or\`{e}me 4.5]{AD87}, without the assumption of inner amenable actions but using nuclearity of $\crs$ as an $\falg$-module in (i) (and nuclearity of $\rcrs$ as an $\falg$-module in (ii)). 
    Unfortunately it seems that such a result would require module versions of the deep results linking injectivity, semidiscreteness and nuclearity, which do not appear to be known in general. 
\end{enumerate}
}
\end{remarks}

The following shows that the class of inner amenable locally compact groups is an answer to \cite[Question 8.3]{BEW20a}.

\begin{corollary}\label{co:inneramengrpCalgamen}
Let $(A,\grp,\alpha)$ be a $C^*$-dynamical system.
Suppose that $\rcrs$ is nuclear and $\grp$ is inner amenable. 
Then $\alpha$ is amenable.
\end{corollary}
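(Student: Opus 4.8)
The plan is to deduce this directly from Theorem~\ref{th:amenCactnuclear}, so the only real task is to check that the hypotheses of that theorem are met, in particular that $\alpha^{**}$ is an inner amenable action of $\grp$ on $\Aact$. First I would observe that since $\grp$ is inner amenable (as a locally compact group), any $w^*$-dynamical system $(N,\grp,\gamma)$ has $\gamma$ inner amenable: this is exactly Remark~\ref{re:inneramengrpsact}, where one averages using an inner-invariant state $\phi$ on $\vN$ to build the equivariant conditional expectation $Q = (\phi \otimes \id) : \vN \btens N \to N$. Applying this to $(N,\grp,\gamma) = (\Aact, \grp, \alpha^{**})$ gives that $\alpha^{**}$ is inner amenable, so Theorem~\ref{th:amenCactnuclear} applies to $(A,\grp,\alpha)$.

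Now, with $\rcrs$ assumed nuclear, condition (ii) of Theorem~\ref{th:amenCactnuclear} holds, hence so does condition (iv): $\alpha$ is amenable (and $\Aact$ is injective). This is precisely the conclusion we want, so the corollary follows.

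The only step that requires any care is making sure the inner amenability hypothesis in Theorem~\ref{th:amenCactnuclear} is genuinely about the $w^*$-dynamical system $(\Aact,\grp,\alpha^{**})$ and that Remark~\ref{re:inneramengrpsact} is applicable to it — i.e.\ that $(\Aact,\grp,\alpha^{**})$ really is a $w^*$-dynamical system in the required sense. This is guaranteed by the construction in Definition~\ref{de:univenvelGvNalg} together with the properties of $\Aact$ recorded from Buss--Echterhoff--Willett~\cite{BEW20a}: the action $\alpha^{**} = \Ad i_\grp$ is point-weak* continuous on $\Aact$, precisely the feature for which Ikunishi's construction was introduced as a replacement for the possibly discontinuous double dual action on $A^{**}$. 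I do not anticipate any substantive obstacle here; the corollary is essentially a direct reading-off of the theorem once the inner amenability of $\alpha^{**}$ is noted, and this in turn is immediate from inner amenability of $\grp$ via Remark~\ref{re:inneramengrpsact}.
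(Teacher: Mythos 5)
Your proposal is correct and follows essentially the same route as the paper: the paper's own proof simply inlines the chain (ii)$\implies$(iii)$\implies$(iv) of Theorem~\ref{th:amenCactnuclear} (injectivity of $\vNcros{\Aact}{\grp}{\alpha^{**}}$ via \cite[Remark 2.6]{BEW20a}, upgrade to $\falg$-module injectivity using inner amenability and \cite[Proposition 2.3]{Cra17}, then Theorem~\ref{th:injectivitynewamenability}), with the inner amenability of $\alpha^{**}$ supplied by Remark~\ref{re:inneramengrpsact} exactly as you note.
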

\begin{proof}
By hypothesis $(\rcrs)^{**}$ is injective, so it follows from \cite[Remark 2.6]{BEW20a} that $\vNcros{\Aact}{\grp}{\alpha^{**}}$ is injective.
Since $\grp$ is inner amenable it follows from Remark~\ref{re:inneramengrpsact} that $\alpha^{**}$ is inner amenable, that is, $\vNcros{\Aact}{\grp}{\alpha^{**}}$ is relatively injective in $\falg - \modu$.
Now \cite[Proposition 2.3]{Cra17} shows that $\vNcros{\Aact}{\grp}{\alpha^{**}}$ is injective in $\falg - \modu$.
By Theorem~\ref{th:injectivitynewamenability} $\alpha$ is amenable.
\end{proof}

For question (2) on restriction to closed subgroups we refer to Ozawa--Suzuki~\cite[Corollary 3.4]{OzSu20}, where the following result was recently shown.

\begin{proposition}\label{pr:restricttoclosedsubgrps}
Let $(A,\grp,\alpha)$ be a $C^*$-dynamical system with $\alpha$ amenable.
If $H$ is a closed subgroup of $\grp$ then the restriction $\alpha|_H$ is also amenable.
\end{proposition}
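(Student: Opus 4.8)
The plan is to descend to the purely $C^*$-algebraic description of amenability in terms of positive-type functions, where restriction to a closed subgroup is transparent, and then climb back up. By \cite[Theorem 3.6]{BeCr20a} (see also \cite[Section 8]{BEW20a}) amenability of $\alpha$ in the sense of Definition~\ref{de:amenactCalgebra} is equivalent to amenability of $\alpha$ in the sense of \cite[Definition 3.4]{BEW20a}, and one of the equivalent forms of the latter, established in \cites{BEW20a,BeCr20a}, is the existence of a net $(h_i)$ of norm-continuous, compactly supported functions $h_i \colon \grp \to \centre[\Mult(A)]$, each of positive type relative to $\alpha$ and normalised by $\nrm[h_i(e)] \leq 1$, with $h_i \to 1$ uniformly on compact subsets of $\grp$. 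Both directions of these equivalences hold for an arbitrary locally compact group; the proof of Theorem~\ref{th:amenCactnuclear} already uses one of them, and here I would use the other as well.

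Then I would take such a net $(h_i)$ for $\alpha$ and set $k_i \defeq h_i|_H \colon H \to \centre[\Mult(A)]$. Each $k_i$ is norm-continuous, and is compactly supported because $H$ is closed in $\grp$ and $\operatorname{supp} k_i \subseteq H \cap \operatorname{supp} h_i$, a compact set. Moreover $k_i$ is of positive type relative to $\alpha|_H$: the defining positivity
\[
	\sum_{j,k} a_j^*\, \alpha_{s_j}\!\big( h_i(s_j^{-1} s_k) \big)\, a_k \geq 0 , \qquad n \in \NN,\ s_1,\dots,s_n \in \grp,\ a_1,\dots,a_n \in A ,
\]
holds in particular for tuples with all $s_j \in H$, which is precisely positive type of $k_i$ relative to $\alpha|_H$. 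Since $e \in H$ we have $k_i(e) = h_i(e)$, so the normalisation is inherited, and $k_i \to 1$ uniformly on the compact subsets of $H$, these being compact subsets of $\grp$. Hence $(k_i)$ witnesses amenability of $\alpha|_H$ in the sense of \cite[Definition 3.4]{BEW20a}, and applying the equivalence in the other direction to the $C^*$-dynamical system $(A,H,\alpha|_H)$ shows that $\alpha|_H$ is amenable in the sense of Definition~\ref{de:amenactCalgebra}. This reproves \cite[Corollary 3.4]{OzSu20}.

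The point that requires care, and the reason I would route the argument through the $C^*$-level characterisation rather than through the equivariant conditional expectation $P \colon \Linf \btens \Aact \to \Aact$ of Definition~\ref{de:amenactCalgebra}, is twofold. First, one must use a form of the characterisation whose normalisation is pointwise at the identity (such as $\nrm[h_i(e)] \leq 1$); a version phrased through an $L^2(\grp)$-type inner product, with normalisation $\nrm[\int_\grp h_i(s)^* h_i(s)\,ds] \leq 1$, would \emph{not} restrict cleanly, since the Haar measures of $\grp$ and $H$ are unrelated. Second, a direct argument with $P$ seems blocked: restricting the universal covariant representation of $(A,\grp,\alpha)$ to $H$ produces only a normal $H$-equivariant surjection $A^{**}_{\alpha|_H} \twoheadrightarrow \Aact$, which points the wrong way for transporting $P$ to a conditional expectation onto $A^{**}_{\alpha|_H}$. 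Everything else is routine; if one prefers to avoid relying on the full generality of the cited equivalences, imposing second countability on $\grp$ makes them all directly available.
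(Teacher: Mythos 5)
The restriction step of your argument is fine, and both of your cautionary remarks are well taken: an $e$-normalised positive-type characterisation would indeed restrict cleanly to a closed subgroup, and the direct route through the equivariant expectation is indeed blocked because the canonical normal surjection runs $\unactC{A}{\alpha|_H} \twoheadrightarrow \Aact$, i.e.\ the wrong way. The gap is in the input you feed into that step. The characterisation you invoke --- a net of norm-continuous, compactly supported functions $h_i \colon \grp \to \centre[\Mult(A)]$ of positive type with $\nrm[h_i(e)] \leq 1$ and $h_i \to 1$ uniformly on compacta --- is \emph{not} equivalent to amenability in the sense of Definition~\ref{de:amenactCalgebra}; it is a reformulation of \emph{strong} amenability in the sense of \cite{BEW20a}, which is strictly stronger. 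This can be seen from facts quoted in this paper itself: by Suzuki's examples \cite{Suz19} (see the discussion preceding Proposition~\ref{pr:amenCactamengrp}) there exist amenable actions of non-amenable locally compact groups $\grp$ on \emph{simple} $C^*$-algebras $A$. For simple $A$ the Dauns--Hofmann theorem gives $\centre[\Mult(A)] = \CC 1$, so your $h_i$ would be scalar-valued, continuous, compactly supported, positive definite functions on $\grp$ with $h_i(e) \leq 1$ converging to $1$ uniformly on compacta --- and by Godement's theorem that forces $\grp$ to be amenable, a contradiction. The characterisations of amenability that do hold in general (\cite[Definition 3.4]{BEW20a}, \cite[Theorem 3.6]{BeCr20a}) use either positive-type functions valued in $\mathcal{Z}(\Aact)$ with weak* convergence --- and $\mathcal{Z}(\Aact)$ depends on $\grp$, so under restriction to $H$ the net converges to the wrong unit --- or $L^2$-normalised nets in $C_c(\grp,A)$ that are only asymptotically central, where the $\int_\grp$ versus $\int_H$ problem you yourself identify genuinely bites.

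For comparison, the paper offers no proof at all: it records the statement and cites \cite[Corollary 3.4]{OzSu20}. Ozawa--Suzuki's argument is in the spirit of yours, but the substance lies exactly where your sketch is thinnest: they first establish a characterisation of amenability by approximation data that need \emph{not} be central-valued, and then handle the passage from $\grp$ to $H$ for $L^2$-normalised witnesses. As written, your argument proves that \emph{strong} amenability passes to closed subgroups (true, and easy), not that amenability does. To repair it you would need either to prove the $e$-normalised $\centre[\Mult(A)]$-valued characterisation for amenable actions (which is false) or to import the actual content of \cite{OzSu20}.
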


A solution to question (3) follows from work of Buss--Echterhoff--Willett and Bearden--Crann.

\begin{proposition}\label{pr:amenCactfunctorial}
Let $(A,\grp,\alpha)$ and $(B,\grp,\beta)$ be $C^*$-dynamical systems and $\Phi : A \to \Mult(B)$ a $\grp$-equivariant $*$-homomorphism such that $\Phi(A) B$ is dense in $B$ and $\Phi(\centre[\Mult(A)]) \subset \centre[\Mult(B)]$.
If $\alpha$ is amenable then so is $\beta$. 
\end{proposition}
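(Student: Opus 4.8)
The plan is to reduce amenability of $\beta$ to amenability of $\alpha$ by transporting the $\grp$-equivariant conditional expectation witnessing amenability of $\alpha^{**}$ through the functoriality of the universal $w^*$-dynamical system construction $A \mapsto \Aact$. First I would invoke the functoriality properties of $\Aact$ established in Buss--Echterhoff--Willett~\cite{BEW20a}: the equivariant nondegenerate $*$-homomorphism $\Phi : A \to \Mult(B)$ induces an equivariant normal $*$-homomorphism $\Phi^{**} : \Aact \to \unactC{B}{\beta}$ (this is the content of \cite[Corollary 2.3]{BEW20a}, applied to $\Phi$ viewed appropriately), and moreover the hypothesis $\Phi(A)B$ dense in $B$ together with $\Phi(\centre[\Mult(A)]) \subset \centre[\Mult(B)]$ ensures that $\Phi^{**}$ is surjective, or at least that its image is a $\grp$-invariant von~Neumann subalgebra of $\unactC{B}{\beta}$ containing enough of the centre. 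The key point is that amenability of an action passes to equivariant quotients and to invariant corners controlled by the centre; this is exactly property (3) of Problem~\ref{pro:amenactCalgqus} at the level of $w^*$-dynamical systems, and it is essentially \cite[Theorem 3.6]{BeCr20a} combined with the survey in \cite[Section 8]{BEW20a}.

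Concretely, I would argue as follows. Amenability of $\alpha$ means, by Definition~\ref{de:amenactCalgebra}, that $\alpha^{**}$ is an amenable action of $\grp$ on $\Aact$ in the sense of Definition~\ref{de:ADamenableaction}. By \cite[Theorem 3.6]{BeCr20a} this is equivalent to amenability of $\alpha$ in the sense of \cite[Definition 3.4]{BEW20a}, and the latter notion is manifestly functorial for equivariant nondegenerate $*$-homomorphisms into multiplier algebras, provided the centre condition holds --- this is proved in \cite[Section 8]{BEW20a} (or follows from commutant amenability, \cite[Proposition 5.9]{BEW20a}, as used already in the proof of Theorem~\ref{th:amenCactnuclear}). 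Thus $\beta$ is amenable in the sense of \cite[Definition 3.4]{BEW20a}, and applying \cite[Theorem 3.6]{BeCr20a} again in the other direction gives that $\beta^{**}$ is an amenable action of $\grp$ on $\unactC{B}{\beta}$, i.e.\ $\beta$ is amenable in the sense of Definition~\ref{de:amenactCalgebra}.

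Alternatively, if one wants to stay closer to the von~Neumann algebraic definition used in this paper, the hands-on route would be: take the $\grp$-equivariant norm $1$ projection $P : \Linf \btens \Aact \to \Aact$ given by amenability of $\alpha^{**}$, use $\Phi^{**} : \Aact \to \unactC{B}{\beta}$ and the fact that it has an equivariant conditional expectation onto its image (built from the centre-preservation hypothesis, as in \cite[Proposition 2.3]{Cra17}-style arguments), and push $P$ forward through $\id_{\Linf} \btens \Phi^{**}$ to obtain a $\grp$-equivariant norm $1$ projection $\Linf \btens \unactC{B}{\beta} \to \unactC{B}{\beta}$, using that $\unactC{B}{\beta}$ is the weak* closure of $\Phi^{**}(\Aact) \cdot \unactC{B}{\beta}$ and an averaging argument over the centre. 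The main obstacle is precisely this last transfer step: one must verify that the centre condition $\Phi(\centre[\Mult(A)]) \subset \centre[\Mult(B)]$ is exactly what makes the pushed-forward projection land inside $\unactC{B}{\beta}$ rather than a larger algebra, and that the range of $\Phi^{**}$ is large enough (generates $\unactC{B}{\beta}$ together with the multiplier action). Rather than redo this, I expect the cleanest proof simply cites \cite[Theorem 3.6]{BeCr20a} and the functoriality discussion in \cite[Section 8]{BEW20a}, since those references already establish the equivalence with a manifestly functorial notion of amenable action; the work has effectively been done there, and the proposition records the consequence for Definition~\ref{de:amenactCalgebra}.
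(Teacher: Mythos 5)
Your proposal is correct and follows essentially the same route as the paper: the paper's proof is precisely the two-line argument you identify as the ``cleanest'' one, namely invoking \cite[Theorem 3.6]{BeCr20a} to identify Definition~\ref{de:amenactCalgebra} with the notion of \cite[Definition 3.4]{BEW20a} and then citing the functoriality result of Buss--Echterhoff--Willett (specifically \cite[Lemma 3.17]{BEW20a}) for that notion. The extra hands-on construction you sketch via $\Phi^{**}$ is not needed and is not what the paper does.
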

\begin{proof}
By \cite[Theorem 3.6]{BeCr20a} $\alpha$ is amenable in our sense if and only if it is amenable in the sense of \cite[Definition 3.4]{BEW20a} (see also \cite[Section 8]{BEW20a}). 
Thus the claim is equivalent to \cite[Lemma 3.17]{BEW20a}.
\end{proof}

Examples given by Suzuki~\cite{Suz19} show that our definition of amenable action does not satisfy the requirements of problem (4) --- see \cite[Section 3]{BEW20b} and \cite[Corollary 3.13]{BEW20a}. 
However, we are able to solve problem (4) in the special case where $A$ is the compact operators on some Hilbert space (see \cite[Observation 5.24]{BEW20a}), as this is the only case for which we know that $\Aact$ is a factor.
It would be interesting to have other examples where $\Aact$ is a factor.

\begin{proposition}\label{pr:amenCactamengrp}
Let $(\Cpt,\grp,\alpha)$ be a $C^*$-dynamical system with $\alpha$ amenable and $\Cpt$ the $C^*$-algebra of compact operators on the Hilbert space $\HH$.
Then $\grp$ is amenable.
\end{proposition}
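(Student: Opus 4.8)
The plan is to exploit the fact that $\Aact$ is a factor when $A = \Cpt$, which reduces the problem to applying Proposition~\ref{pr:AGamenactamengrp} (or its proof) to the $w^*$-dynamical system $(\Aact, \grp, \alpha^{**})$. First I would recall from \cite[Observation 5.24]{BEW20a} that when $A = \Cpt[\HH]$ the von~Neumann algebra $\Aact$ is a factor, so its centre $\centre[\Aact]$ is trivial, equal to $\CC 1$. Since $\alpha$ is amenable by hypothesis, Definition~\ref{de:amenactCalgebra} tells us that $\alpha^{**}$ is an amenable action of $\grp$ on $\Aact$ in the sense of Definition~\ref{de:ADamenableaction}.

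Next I would invoke Proposition~\ref{pr:AGamenactamengrp} with $M = \Aact$: the equivalence there states that $\grp$ is amenable if and only if $\alpha^{**}$ is amenable and $\centre[\Aact]$ has a $\grp$-invariant state. The first condition holds by the previous paragraph. For the second, since $\centre[\Aact] = \CC 1$ is one-dimensional, the unique state on it (namely $\lambda 1 \mapsto \lambda$) is automatically $\grp$-invariant, because the restriction of $\alpha^{**}_r$ to the centre is a unital $*$-automorphism of $\CC 1$, hence the identity. Therefore both conditions in part (ii) of Proposition~\ref{pr:AGamenactamengrp} are satisfied, and we conclude that $\grp$ is amenable.

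I do not anticipate a genuine obstacle here: the entire content is the observation that a factor has trivial centre, which trivially carries an invariant state, so the general machinery of Proposition~\ref{pr:AGamenactamengrp} applies directly. The only subtlety worth a sentence is justifying that $\Aact$ is a factor when $A = \Cpt$; this is not proved in the present paper but is cited from \cite[Observation 5.24]{BEW20a}, and the remark preceding this proposition already flags that this is the only case where factoriality of $\Aact$ is known. If one wanted a self-contained argument one could instead observe directly that a $\grp$-equivariant conditional expectation $P : \Linf \btens \Aact \to \Aact$ composed with any normal state on $\Aact$ restricted to $\Linf \otimes 1$ yields a left-invariant mean on $\Linf$ --- but since $\centre[\Aact] = \CC$ there is nothing to check beyond citing Proposition~\ref{pr:AGamenactamengrp}, so the short route through that proposition is cleanest.
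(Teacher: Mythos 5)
Your proof is correct and is essentially the same as the paper's: both reduce to Proposition~\ref{pr:AGamenactamengrp} applied to $(\Aact,\grp,\alpha^{**})$ using the fact that $\Aact$ has trivial centre, hence a trivially $\grp$-invariant state. The only cosmetic difference is the citation: the paper invokes Ikunishi's computation that $\Cpt^{**}_\alpha = \Bd$ directly, whereas you cite the factoriality observation from Buss--Echterhoff--Willett, which the paper itself references in the preceding remark.
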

\begin{proof} 
Ikunishi~\cite[Example 1]{Iku88} shows that $\Cpt^{**}_\alpha = \Bd$, so the claim follows from Proposition~\ref{pr:AGamenactamengrp}.
\end{proof}

A solution to question (5) can also be deduced from the work of Bearden--Crann and Buss--Echterhoff--Willett.

\begin{proposition}\label{pr:amenCactregular}
Let $(A,\grp,\alpha)$ be a $C^*$-dynamical system with $\alpha$ amenable.
Then the canonical quotient map $\crs \to \rcrs$ is an isomorphism. 
\end{proposition}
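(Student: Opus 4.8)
The plan is to reduce the statement to the von~Neumann algebraic picture already developed in the paper, namely the equivalence between amenability of $\alpha$ and injectivity of the crossed product $\vNcros{\Aact}{\grp}{\alpha^{**}}$ in $\falg - \modu$, and then transfer this back to the level of $C^*$-algebras. The key obstacle is that the canonical quotient map $q : \crs \to \rcrs$ is an isomorphism precisely when the universal norm and the reduced norm on $C_c(\grp, A)$ agree, and we need to produce, from amenability of $\alpha$, a way of bounding the universal norm by the reduced norm.

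First I would recall, via \cite[Theorem 3.6]{BeCr20a}, that amenability of $\alpha$ in the sense of Definition~\ref{de:amenactCalgebra} is equivalent to amenability in the sense of \cite[Definition 3.4]{BEW20a}. The latter notion was specifically designed so that, among its many equivalent formulations, one finds exactly the conclusion $\crs = \rcrs$; this is recorded in \cite[Theorem 6.6]{BEW20a} (or the analogous main theorem of that paper). So the cleanest route is simply to invoke these two results: amenability in our sense implies amenability in the Buss--Echterhoff--Willett sense, which in turn implies weak containment, i.e.\ that the regular representation of $\crs$ is faithful, i.e.\ that $q$ is an isomorphism.

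Concretely, the proof should read: by \cite[Theorem 3.6]{BeCr20a} (see also \cite[Section 8]{BEW20a}) amenability of $\alpha$ in the sense of Definition~\ref{de:amenactCalgebra} is equivalent to amenability in the sense of \cite[Definition 3.4]{BEW20a}; by \cite[Theorem 6.6]{BEW20a} the latter implies that $(A,\grp,\alpha)$ has the weak containment property, so the canonical surjection $\crs \to \rcrs$ is injective, hence an isomorphism. An alternative self-contained argument would go through Theorem~\ref{th:amenCactnuclear}: if $A$ happens to be nuclear, amenability of $\alpha$ gives injectivity of $\vNcros{\Aact}{\grp}{\alpha^{**}}$ in $\falg - \modu$, hence nuclearity of $\crs$, and then one compares $\crs$ with $\rcrs$ via the kernel of $q$ being an ideal on which the reduced norm vanishes --- but this only handles the nuclear case and in general one cannot avoid the external input, so the main obstacle really is that the statement is not something one proves from the module machinery of this paper alone; it genuinely requires the equivalence of amenability notions established elsewhere.

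The main difficulty, then, is not a calculation but a bookkeeping one: making sure the chain of equivalences between the several competing definitions of amenable action (Anantharaman-Delaroche/von~Neumann amenable, Buss--Echterhoff--Willett amenable, commutant amenable, and the weak containment property) is cited correctly, and that the hypotheses of \cite[Theorem 3.6]{BeCr20a} and the relevant theorem of \cite{BEW20a} are met in the generality of an arbitrary locally compact $\grp$ and arbitrary $C^*$-algebra $A$. I expect the proof to be two or three lines once those citations are lined up.

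\begin{proof}
By \cite[Theorem 3.6]{BeCr20a} (see also \cite[Section 8]{BEW20a}), amenability of $\alpha$ in the sense of Definition~\ref{de:amenactCalgebra} is equivalent to amenability in the sense of \cite[Definition 3.4]{BEW20a}. By \cite[Theorem 6.6]{BEW20a} the latter property implies that $(A,\grp,\alpha)$ has the weak containment property, \textit{i.e.}\ the canonical quotient map $\crs \to \rcrs$ is injective. Being also surjective, it is an isomorphism.
\end{proof}
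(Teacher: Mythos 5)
Your proof is correct and takes essentially the same route as the paper: both pass from amenability in the sense of Definition~\ref{de:amenCactregular} --- rather, of Definition~\ref{de:amenactCalgebra} --- to amenability in the sense of \cite[Definition 3.4]{BEW20a} via \cite[Theorem 3.6]{BeCr20a}, and then invoke the Buss--Echterhoff--Willett result that amenability implies weak containment. The only discrepancy is the citation of the latter result: the paper points to \cite[Theorem 5.9]{BEW20a} rather than Theorem 6.6, so you should just confirm the correct statement number before finalising.
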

\begin{proof}
It follows from the result of Bearden--Crann~\cite[Theorem 3.6]{BeCr20a} that if $\alpha$ is amenable in our sense then it is amenable in the sense of \cite[Definition 3.4]{BEW20a}. 
The conclusion then follows from \cite[Theorem 5.9]{BEW20a}.
\end{proof}

\begin{remark}\label{re:equivamenregimpossible}
{\rm 
Buss--Echterhoff--Willett~\cite[Proposition 5.28, Example 5.29]{BEW20a} have shown that the converse to Proposition~\ref{pr:amenCactregular} does not hold for general locally compact groups.  
We refer to \cite[Section 5]{BEW20a} for an investigation of other conditions which are related to this \emph{weak containment property}.
}
\end{remark}

\subsection*{Acknowledgements} 
The second author thanks Alireza Medghalchi for his continuous encouragement and the Department of Mathematics of Kharazmi University for support.
Parts of this work were completed when the second author was visiting the first author at Chalmers University of Technology and the University of Gothenburg; she would like to thank Department of Mathematical Sciences at Chalmers University of Technology and the University of Gothenburg for their warm hospitality. 
Further progress was made when the authors attended the 7th Workshop on Operator Algebras and their Applications in Tehran, in January 2020; we are very grateful to the organisers for their hospitality.
We thank Lyudmila Turowska and Massoud Amini for their valuable comments on early versions of our results and Fatemeh Khosravi for helpful discussions during this work.
Finally, thanks to Siegfried Echterhoff for pointing out a mistake in a previous version of Section~\ref{sec:amenactCalgs}.

\bibliographystyle{plain}
\bibliography{approxnondiscrete}

\end{document}